\newcommand{\scal}[2]{\left\langle #1 , #2 \right\rangle}
\DeclareMathOperator{\IR}{\mathbb{R}}
\DeclareMathOperator*{\argmin}{argmin}
\DeclareMathOperator*{\argmax}{argmax}
\DeclareMathOperator{\Ccal}{\mathcal{C}}
\DeclareMathOperator{\diag}{diag}
\DeclareMathOperator{\KL}{KL}
\DeclareMathOperator{\SK}{\textrm{SK}}
\newcommand{\norm}[1]{\left\lVert #1 \right\rVert}
\renewcommand{\epsilon}{\varepsilon}
\newcommand{\func}{\Theta}
\newcommand{\da}{\alpha} 
\newcommand{\db}{\beta} 
\newcommand{\uu}{u}  
\newcommand{\vv}{v}  
\newcommand{\ww}{s} 
\theoremstyle{plain}
\newtheorem{theorem}{Theorem}
\newtheorem{proposition}{Proposition}
\newtheorem{lemma}{Lemma}
\newtheorem{corollary}{Corollary}
\theoremstyle{definition}
\theoremstyle{remark}
\newtheorem{remark}{Remark}
\begin{document}

\title{Overrelaxed Sinkhorn--Knopp Algorithm for Regularized Optimal Transport}

\author[1]{Alexis Thibault}
\author[2]{L\'ena\"ic Chizat}
\author[3]{Charles Dossal}
\author[4]{Nicolas Papadakis}
\date{}      
%
\affil[1]{Inria Bordeaux Sud Ouest, Talence, France}
\affil[2]{CNRS, Laboratoire de Mathématiques d'Orsay, Université Paris-Saclay, France}

\affil[3]{INSA Toulouse,    France}
 
\affil[4]{CNRS, Univ. Bordeaux, IMB, F-33400 Talence, France}

\date{}

\maketitle
\abstract{This article describes a set of methods for quickly computing the solution to the regularized optimal transport problem. It generalizes and improves upon the widely-used iterative Bregman projections algorithm (or Sinkhorn--Knopp algorithm).
We first propose to rely on regularized nonlinear acceleration schemes.  In practice, such approaches lead to  fast algorithms, but their global convergence is not ensured.
Hence, we next propose a new algorithm with convergence guarantees.
The idea is to overrelax the Bregman projection operators, allowing for faster convergence. 
We propose a simple method for establishing global convergence by ensuring the decrease of a Lyapunov function at each step.
An adaptive choice of overrelaxation parameter based on the Lyapunov function is constructed.
We also suggest a heuristic to choose a suitable asymptotic overrelaxation parameter, based on a local convergence analysis. Our numerical experiments show a gain in convergence speed by an order of magnitude in certain regimes.}

\section{Introduction}
Optimal Transport is an efficient and flexible tool to compare two probability distributions which has been popularized in the computer vision community in the context of discrete histograms \cite{Rubner2000}. The introduction of entropic regularization of the optimal transport problem in \cite{cuturi13} has made possible the use of the fast Sinkhorn--Knopp algorithm \cite{sinkhorn64}   scaling with high dimensional data. 
Regularized optimal transport have thus been intensively used  in  Machine Learning with applications such as   Geodesic PCA \cite{seguy2015principal}, domain adaptation \cite{2015arXiv150700504C}, data fitting \cite{2015arXiv150605439F},  training of Boltzmann Machine \cite{NIPS2016_6248}  or dictionary learning \cite{Rolet2016,2017arXiv170801955S}.

The computation of optimal transport between two data relies on the estimation of an optimal transport matrix, the entries of which represent the quantity of mass transported between  data locations. 
Regularization of optimal transport with strictly convex regularization \cite{cuturi13, dessein2016}  nevertheless involves a spreading of the mass. Hence, for particular purposes such as color interpolation \cite{Rabin2014} or gradient flow \cite{2016arXiv160705816C}, it is  necessary  to consider small parameters $\varepsilon$ for the entropic regularization term. The Sinkhorn-Knopp (SK) algorithm is a state of the art algorithm to solve the regularized transport problem. The SK algorithm performs alternated projections and the sequence of generated iterates converges to a solution of the regularized transport problem. Unfortunatly, the lower $\varepsilon$ is, the slower the SK algorithm converges. To improve the convergence rate of SK, several acceleration strategies have been proposed in the literature, based for example on mixing or over-relaxation. 


\subsection{Accelerations of the Sinkhorn--Knopp Algorithm.}
In the literature, several accelerations of the Sinkhorn--Knopp algorithm have been proposed, using for instance greedy coordinate descent \cite{altschuler2017near} or screening strategies \cite{alaya2019screening}. In another line of research, the introduction of relaxation variables through heavy ball approaches \cite{POLYAK19641} has recently gained in popularity  to speed up the convergence of algorithms optimizing convex \cite{2014arXiv1412.7457G} or non convex \cite{Zavriev1993,2016arXiv160609070O} problems. 
In this context, the use of regularized nonlinear accelerations (RNA)  \cite{anderson1965iterative,scieur2018online} based on the Anderson mixing have reported important numerical improvements, although the  global convergence  is not guaranteed with such approaches as it is shown further. In this paper we also investigate another approach related to the successive overrelexation (SOR) algorithm~\cite{young2014iterative}, which is a classical way to solve linear systems. Similar schemes have been empirically considered to accelerate the SK algorithm  in \cite{peyre2016quantum,2017arXiv170801955S}. The convergence of these algorithms has nevertheless not been studied yet in the context of regularized optimal transport.

\subsection{Overview and contributions}
The contribution of this paper is twofold. First, the numerical efficiency of the RNA methods applied to the SK algorithm to solve the regularized transport problem is shown. Second, a new  extrapolation and relaxation technique for accelerating the Sinkhorn--Knopp (SK) algorithm, ensuring convergence is given. The numerical efficiency of this new algorithm is demonstrated and an heuristic rule is also proposed to improve the rate of the algorithm.

Section 2 is  devoted to the Sinkhorn--Knopp algorithm.  In Section 3, we propose to apply regularized nonlinear acceleration (RNA) schemes to the SK algorithm. We experimentally show that such methods lead to impressive accelerations for low values of the entropic regularization parameter. 
In order to have a globally converging method, we then propose a new  overrelaxed algorithm. In Section 4, we show the global convergence of our algorithm and analyse its local convergence rate  to justify the acceleration.
We finally demonstrate numerically  in Section 5 the interest of our method. Larger accelerations are indeed observed for decreasing values of the entropic regularization parameter.

\begin{remark} This paper is an updated version of an unpublished work \cite{thibault2017overrelaxed} presented at the NIPS 2017 Workshop on  Optimal Transport \& Machine Learning. In the meanwhile, complementary results on the global convergence of our method presented in Section 4 have been provided in \cite{lehmann2020note}. The authors show the existence of a parameter ${\theta_0}$ such that   both global convergence and local acceleration are ensured for overrelaxation parameters $\omega\in (1,{\theta_0})$. This result is nevertheless theoretical and the numerical estimation of ${\theta_0}$ is still an open question. With respect to our unpublished work \cite{thibault2017overrelaxed}, the current article presents an original contribution  in section \ref{sec:RNA}: the application of RNA methods to accelerate the convergence of the SK algorithm.
\end{remark}

\section{Sinkhorn algorithm}
Before going into further details, we now briefly introduce the main notations and concepts used all along this article.

\subsection{Discrete optimal transport}
We consider two discrete probability measures $\mu_k \in \IR_{+*}^{n_k}$.
Let us define the two following linear operators
\begin{align*}
	A_1 &: \begin{cases}
		\IR^{n_1 n_2} \rightarrow \IR^{n_1} \\
		(A_1 x)_i = \sum_j x_{i,j}
	\end{cases} &
	A_2 &: \begin{cases}
		\IR^{n_1 n_2} \rightarrow \IR^{n_2}\\
		(A_2 x)_j = \sum_i x_{i,j},
	\end{cases}
\end{align*}
as well as the affine constraint sets
\begin{align*}
	\Ccal_k &= \left\{ \gamma\in\IR^{n_1 n_2} \mid A_k \gamma = \mu_k \right\}.
\end{align*}
Given a cost matrix $c$ with nonnegative coefficients, where $c_{i,j}$ represents the cost of moving mass $(\mu_1)_i$ to $(\mu_2)_j$,  the optimal transport problem corresponds to the estimation of an optimal transport matrix $\gamma$ solution of:
$$\min_{\gamma\in\Ccal_1\cap \Ccal_2\cap \IR^{n_1 n_2}_+} \langle c,\gamma\rangle:=\sum_{i,j}c_{i,j}\gamma_{i,j}.$$
This is a linear programming problem whose resolution becomes intractable for large problems.

\subsection{Regularized optimal transport}

In \cite{cuturi13}, it has been proposed to regularize this problem by adding a strictly convex entropy regularization:
\begin{equation}\label{ROT}
	\min_{\gamma\in\Ccal_1\cap \Ccal_2\cap \IR^{n_1 n_2}_{+}}K^\epsilon(\gamma) \coloneqq \scal{c}{\gamma} 
	+ \epsilon \KL(\gamma,\mathbf{1})
	,\end{equation}
with $\epsilon>0$, $\mathbf{1}$ is the matrix of size $n_1\times n_2$ full of ones and the Kullback-Leibler divergence is
\begin{equation}\label{KL}
	\KL(\gamma,\zeta) = \sum_{i,j} \gamma_{i,j} \left( \log \left( \frac{\gamma_{i,j}}{\zeta_{i,j}} \right) -1  \right) + \sum_{i,j} \zeta_{i,j}
\end{equation}
with the convention $0 \log 0= 0$. It was shown in \cite{benamou15}  that the regularized optimal transport matrix $\gamma^*$, which is the unique minimizer of problem \eqref{ROT},  is the Bregman projection of $\gamma^0 = e^{-c/\epsilon}$ (here and in the sequel, exponentiation is meant entry-wise) onto $\Ccal_1 \cap \Ccal_2$:
\begin{equation}\label{eq:reg_ot_pb}
	\gamma^* = \argmin_{\Ccal_1 \cap \Ccal_2} K^\epsilon(\gamma)= P_{\Ccal_1 \cap \Ccal_2} (e^{-c/\epsilon}),
\end{equation}
where $P_{\Ccal}$ is the  Bregman projection onto $\Ccal$ defined as
\[
P_{\Ccal}(\zeta) \coloneqq \argmin_{\gamma \in \Ccal} \KL(\gamma,\zeta).
\]

\subsection{Sinkhorn--Knopp algorithm}\label{sec:SK}
Iterative Bregman projections onto $\Ccal_1$ and $\Ccal_2$ converge to a point in the intersection $\Ccal_1 \cap \Ccal_2$ \cite{bregman67}. Hence, the so-called Sinkhorn--Knopp algorithm (SK) \cite{sinkhorn64} that performs alternate Bregman projections, can be considered to compute the regularized  transport matrix:
\begin{align*}
	\gamma^0 &= e^{-c/\epsilon} &
	\gamma^{\ell+1} = P_{\Ccal_2}(P_{\Ccal_1}(\gamma^{\ell})),
\end{align*}
and we have 
$\lim_{l\rightarrow +\infty} \gamma^{\ell} = P_{\Ccal_1 \cap \Ccal_2}(\gamma^0) = \gamma^*.$

In the discrete setting, these projections correspond to diagonal scalings of the input:
\begin{align}\label{scaling}
	P_{\Ccal_1}(\gamma) &= \diag(\uu) \gamma &\text{with}\quad
	\uu &=  {\mu_1}\oslash{A_1 \gamma} \\
	P_{\Ccal_2}(\gamma) &= \gamma \diag(\vv) &\text{with}\quad
	\vv &= {\mu_2}\oslash{A_2 \gamma}\nonumber
\end{align}
where $\oslash$ is the pointwise division. 
To compute numerically the solution one simply has to store $(\uu^{\ell}, \vv^{\ell})\in\IR^{n_1}\times \IR^{n_2}$ and to iterate
\begin{align}\label{eq:update_SK}
	\uu^{\ell+1} &= {\mu_1}\oslash{\gamma^0 \vv^{\ell}} &
	\vv^{\ell+1} &= {\mu_2}\oslash{^t \gamma^0 \uu^{\ell+1}} .
\end{align}
We then have $\gamma^{\ell} = \diag(\uu^{\ell}) \gamma^0 \diag(\vv^{\ell}).$ 

Another way to interpret the SK algorithm is as an alternate maximization algorithm on the dual of the regularized optimal transport problem, see \cite{CuturiPeyre19}, Remark 4.24. The dual problem of \eqref{ROT} is
\begin{equation}\label{DROT}
	\max_{\substack{\da\in \IR^n\\\db\in \IR^m}}\; E(\da,\db) \coloneqq \langle \da,\mu_1\rangle+\langle \db,\mu_2\rangle-\epsilon\sum_{i,j}e^{(\da_i+\db_j-c_{i,j})/\epsilon}.
\end{equation}
As the function $E$ is concave, continuously differentiable and admits a maximizer, the following alternate maximization algorithm converges to a global optimum:

\begin{align}\label{algo_SK}
	\da^{\ell+1} &=  \argmax_\da E(\da,\db^{\ell})\\
	\db^{\ell + 1}&=\argmax_\db E(\da^{\ell+1},\db).\label{algo_SKb}
\end{align}
The explicit solutions of previous problems write
\begin{align}\label{algo_SK2}
	\da^{\ell+1}_i &=  \epsilon\log\left(\sum_j \exp{\Big(\log(\mu_1)_i-\left(\db^{\ell}_j-c_{i,j}\right)/\epsilon\Big)}\right)&i=1\cdots n_1\\
	\db^{\ell + 1}_j&= \epsilon\log\left(\sum_i \exp{\Big(\log(\mu_2)_j-\left(\da^{\ell+1}_i-c_{i,j}\right)/\epsilon\Big)}\right)&j=1\cdots n_2\label{algo_SK2b}
\end{align}
and we recover the SK algorithm \eqref{eq:update_SK} by taking $\uu_i=e^{\da_i/\epsilon}$, $\vv_j=e^{\db_j/\epsilon}$ and $\gamma^0_{i,j}=e^{-c_{i,j}/\epsilon}$.

Efficient parallel computations can be considered \cite{cuturi13} and one can almost reach real-time computation for large scale problem for certain class of cost matrices $c$  allowing the use of separable convolutions \cite{Solomon2015}. 
For low  values of the parameter $\epsilon$, numerical issues can arise and the log stabilization version of the algorithm presented in relations \eqref{algo_SK2} and \eqref{algo_SK2b} is necessary \cite{2016arXiv160705816C}.
Above all, the linear rate of convergence degrades as $\epsilon\to 0$ (see for instance Chapter 4 in  \cite{CuturiPeyre19}).
In the following sections, we introduce different numerical schemes that accelerate the convergence in the regime $\epsilon\to 0$.

\section{Regularized Nonlinear Acceleration of the  Sinkhorn-Knopp algorithm}\label{sec:RNA}
In order to accelerate the SK algorithm for low values of the regularization parameter $\epsilon$, we propose to rely on regularized nonlinear acceleration (RNA) techniques. 
In subsection \ref{sec:rna_def}, we first introduce RNA methods. The application to SK is then detailed in subsection \ref{sec:RNA4SK}.

\subsection{Regularized Nonlinear Acceleration}\label{sec:rna_def}

To introduce RNA, we first rewrite the SK algorithm \eqref{algo_SK}-\eqref{algo_SKb} as 
\begin{equation}\label{algo_SK_onestep}
\db^{\ell + 1}=\SK(\db^\ell):=\argmax_\db E\left(\argmax_\da E(\da,\db^\ell),\db\right).
\end{equation}
The goal of this algorithm is actually to build a sequence $(\beta^{\ell})_{l\geqslant 1}$ converging to a fixed point of SK, i.e. to a point $\beta^*$ satisfying 
\begin{equation}
\beta^*=SK(\beta^*).
\end{equation}
Actually many optimization problem can be recasted as fixed point problems. 
The Anderson acceleration or Anderson mixing, is a classical method to build a sequence $(x^n)_n$ that converges numerically fast to a fixed point of any operator $T$ from $\mathbb{R}^N$ to $\mathbb{R}^N$. This method defines at each step a linear but not necessary convex combination of some previous values of $(x^k)_k$ and $(Tx^k)_k$ to provide a value of $x^n$ such that $\norm{x^n-Tx^n}$ is as low as possible.

Numerically, fast local convergence rates can be observed when the operator $T$ is smooth. This method can nevertheless be unstable even in the  favourable setting where $T$ is affine. Such case arises for instance when minimizing  a quadratic function  $F$ with the descent operator $T=I-h\nabla F$, with time step $h>0$.   
Unfortunately there are no convergence guarantees, in a general setting or in the case $T=SK$, that the RNA sequence $(x^n)_n$ converges for any starting point $x^0$.\\

RNA is an algorithm that can be seen as a generalization of  the Anderson acceleration. It can also be applied to any fixed point problem. 
The RNA method \cite{scieur2018online} applied to algorithm \eqref{algo_SK_onestep} using at each step the $N$ previous iterates is:
\begin{align}\label{algo_RNA_SK}
(\ww^{\ell+1}_{l})_{l=0}^{N-1}&=\argmin_{{\bf \ww;}\, \sum_{l=0}^{N-1}\ww_l=1}\mathcal{P}_\lambda\left({\bf \ww},(\db^{\ell-l}-y^{\ell-l})_{l=0}^{N-1}\right)\\
y^{\ell+1}&=\sum_{l=0}^{N-1}\ww_{l}^{\ell+1}(y^{\ell-l}+\omega(\db^{\ell-l}-y^{\ell-l}))\label{algo_RNA_SKb}\\
\db^{\ell+1}&=\SK(y^{\ell+1})\label{algo_RNA_SKc},
\end{align}
where ${\bf \ww}=(\ww_{l})_{l=0}^{N-1}$ are extrapolation weights and $\omega$ is a relaxation parameter. RNA uses the memory of the past trajectory when $N>1$. We can remark that for $N=1$, $\ww_0^\ell=1$ for all $\ell\geqslant 0$ and RNA is reduced to a simple relaxation parameterized by $\omega$ :
\begin{equation*}
y^{\ell+1}=y^\ell+\omega (\beta^{\ell}-y^{\ell}).
\end{equation*}
Let us now discuss the role of the  different RNA parameters $\omega$ and ${\bf \ww}$.\\

\noindent
{\bf Relaxation}. 
Taking origins  from  Richardson's method \cite{richardson1911ix}, relaxation  leads  to numerical convergence improvements in gradient descent schemes \cite{iutzeler2019generic}. 
Anderson suggests to underrelax with  $\omega\in(0;1]$, while the authors of \cite{scieur2018online} propose to take $\omega=1$.\\

\noindent
{\bf Extrapolation}. 
Let us define the residual $r(y)=\textrm{SK}(y)-y$. As the objective is to estimate the fixed point of SK, the  extrapolation step  builds a vector $y$ such that $||r(y)||$ is minimal.
A relevant guess of such $y$ is obtained by looking at  a linear combination of previous iterates that reaches this minimum. 
More precisely, RNA methods estimate the weight vector $(\ww^{\ell+1}_{l})_{l=0}^{N-1}$ as  the unique solution of the regularized  problem:
\begin{align}\label{pb_weights}
(\ww^{\ell+1}_{l})_{l=0}^{N-1}&=\argmin_{{\bf \ww;}\, \sum_{l=0}^{N-1}\ww_l=1}\mathcal{P}_\lambda({\bf \ww},R):=||R{\bf \ww}||^2+\lambda ||{\bf \ww}||^2\\
&=\frac{(^tRR+\lambda \mathrm{Id}_N)^{-1}\mathbf{1}_N}{\langle(^tRR+\lambda \mathrm{Id}_N)^{-1}\mathbf{1}_N,\mathbf{1}_N\rangle}.
\end{align}
where the columns of $R:=[r(y^\ell), \cdots ,r(y^{\ell+1-N})]$ are the  $N$ previous residuals. The regularization parameter $\lambda>0$ generalizes the original Anderson acceleration \cite{anderson1965iterative} introduced for  $\lambda=0$. Taking $\lambda>0$ indeed leads to a more stable numerical estimation of the extrapolation parameters.

\subsection{Application to SK}\label{sec:RNA4SK}

We now detail the whole Sinkhorn-Knopp algorithm using Regularized Nonlinear Acceleration, that is presented in Algorithm \ref{algo:SK_RNA}. 

In all our experiments corresponding to $N>1$, we consider a regularization $\lambda=1e-10$ for the weigth estimation \eqref{pb_weights} within the RNA scheme. 
For the SK algorithm, we  consider the log-stabilization implementation  proposed in \cite{2016arXiv160705816C,schmitzer2016stabilized,2017arXiv170801955S} to avoid numerical errors for low values of $\epsilon$. This algorithm acts on the dual variables $\da,\db$. We refer to the aforementioned papers for more  details.

As the Sinkhorn-Knopp  algorithm successively projects the matrix $\gamma^\ell$ onto the set of linear constraints $\Ccal_k$, $k=1,2$, we take as convergence criteria the error realized on the first marginal of the transport matrix $\gamma=\diag(\exp(\da/\epsilon)) \exp(-c/\epsilon)\diag(\exp(\db/\epsilon))$, that is $\sum_i|\sum_j\gamma_{i,j}^\ell -(\mu_1)_i|<\eta=1e-9$. 
Notice that the variable $\da$ is  just introduced in the algorithm for computing the convergence criteria. 
\begin{algorithm}
	\caption{RNA SK algorithm in the log domain}
	\label{algo:SK_RNA}
	\begin{algorithmic}
		\REQUIRE $\mu_1\in \IR^{n_1}$, $\mu_2\in \IR^{n_2}$, $c\in \IR^{n_1\times n_2}_+$
		
		\STATE Set $\ell=0$, $\db^0,y^0=\mathbf{0}_{n_2}$, $\gamma^0=\exp{-c/\epsilon}$, $\omega\in\IR$, $N>0$ and $\eta>0$
		\STATE Set $x_i=-\max_j({c_{i,j}-y^{0}_j})$ and $\da_i=-\max_j({c_{i,j}-\db^{0}_j})$,\hfill$i=1\cdots n_1$
		\WHILE {$||\exp(\da/\epsilon)\otimes( \gamma^0 \exp(\db^\ell/\epsilon)) -  \mu_1||>\eta$}
		\STATE $\tilde N=\min(N,\ell+1)$
		\STATE $R= [\db^{\ell}-y^{\ell},\cdots ,\db^{\ell+1-{\tilde N}}-y^{\ell+1-{\tilde N}}]$
	 \STATE ${\bf w}=(^tRR+\lambda \mathrm{Id}_{\tilde N})^{-1}\mathbf{1}_{\tilde N}/\langle(^tRR+\lambda \mathrm{Id}_{\tilde N})^{-1}\mathbf{1}_{\tilde N},\mathbf{1}_{\tilde N}\rangle$
\STATE $y^{\ell+1}=\sum_{l=0}^{{\tilde N}-1}w_{l}((1-\omega)y^{\ell-l}+\omega\db^{\ell-l})$
		\STATE $\tilde x_i=-\max_j({c_{i,j}-y^{\ell+1}_j})$,\hfill$i=1\cdots n_1$
		\STATE $x_i=\tilde x_i-\epsilon\log(\sum_j \exp{((-c_{i,j}+\tilde x_i+y^{\ell+1}_j)/\epsilon}-\log(\mu_1)_i))$,\hfill $i=1\cdots n_1$
		\STATE $\db^{\ell+1}_j=y^{\ell+1}-\epsilon\log(\sum_i \exp{((-c_{i,j}+ x_i+y^{\ell+1}_j)/\epsilon}-\log(\mu_2)_j))$,\hfill $j=1\cdots n_2$	
		\STATE $\da_i=-\max_j({c_{i,j}-\db^{\ell+1}_j})$,\hfill$i=1\cdots n_1$
		\STATE $\ell\leftarrow\ell+1$
		\ENDWHILE

		\RETURN $\gamma_{i,j}=\diag(\exp(\da/\epsilon))\gamma^0\diag(\exp(\db/\epsilon))$
	\end{algorithmic}
\end{algorithm}

We now present  numerical results obtained with random cost matrices  of size $100\times 100$ with  entries uniform in $[0,1]$ and uniform marginals $\mu_1$ and $\mu_2$. 
All convergence plots are mean results over $20$ realizations.\\

We first consider the relaxation parameter $\omega=1$, in order to recover the original SK algorithm for  $N=1$. 
In Figure \ref{fig:RNA1}, we show convergence results obtained with RNA orders $N\in\{1,2,4,8\}$ on $4$ regularized transport problems corresponding to entropic  parameters $\epsilon\in\{0.003, 0.01,0.03,0.1\}$. 
Figure \ref{fig:RNA1} first illustrates that the convergence is improved with higher RNA orders $N$.

The acceleration is also larger for low values of the regularization parameter $\epsilon$. This is an important behaviour as a lot of iterations are required to have an accurate estimation of these  challenging  regularized transport problems. In the settings $\epsilon\in\{0.003,0.01\}$ a speed up of more than $\times 100$ in term of iteration number  is observed between RNA orders $N=8$ and $N=1$ (SK) to reach the same convergence threshold. We did not observe a significant improvement by considering higher RNA orders such as $N=16$. \\

\begin{figure}[ht!]
    \centering
    \begin{tabular}{cc}
    \includegraphics[width=0.465\textwidth]{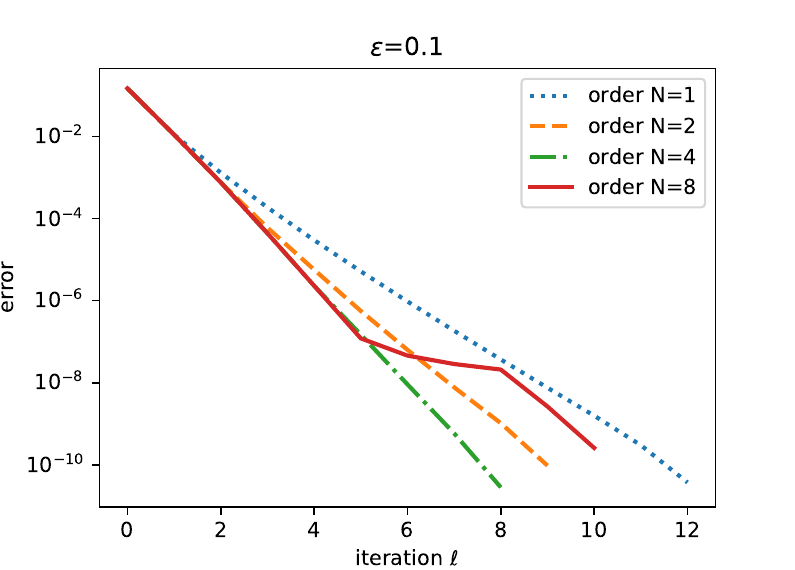}&
    \includegraphics[width=0.465\textwidth]{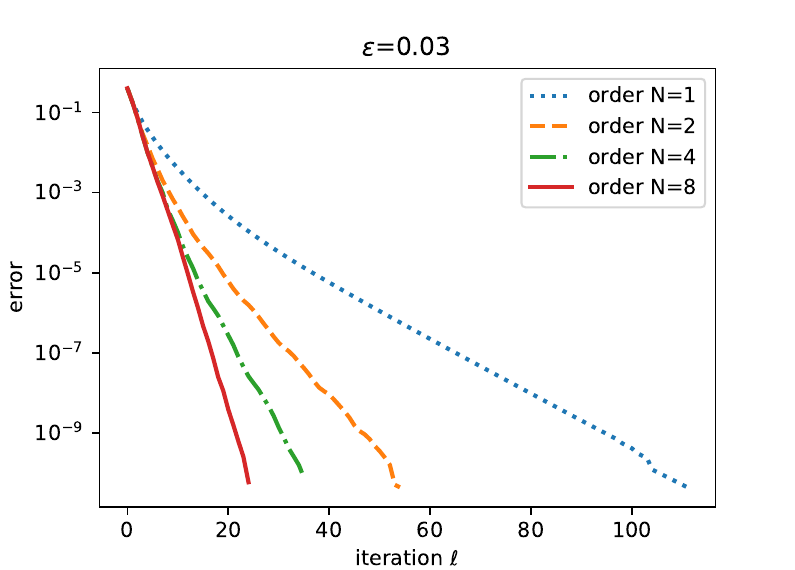}\\
    \includegraphics[width=0.465\textwidth]{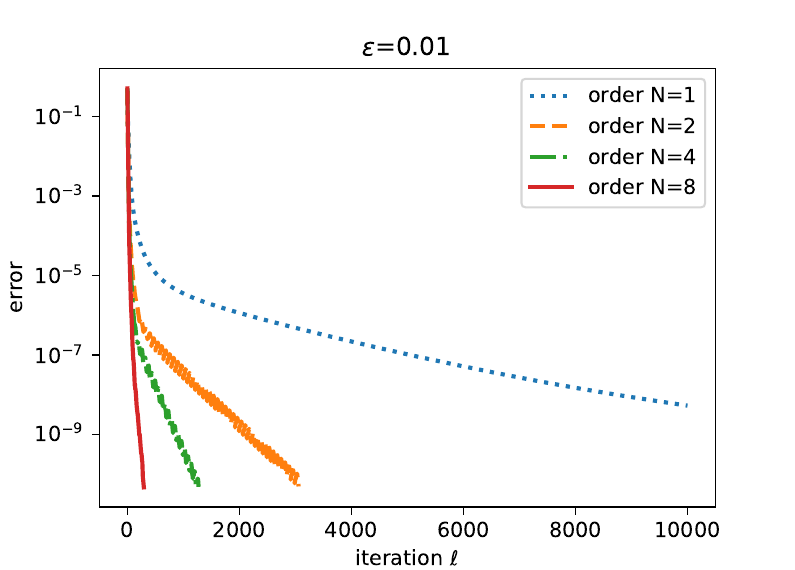}&
    \includegraphics[width=0.465\textwidth]{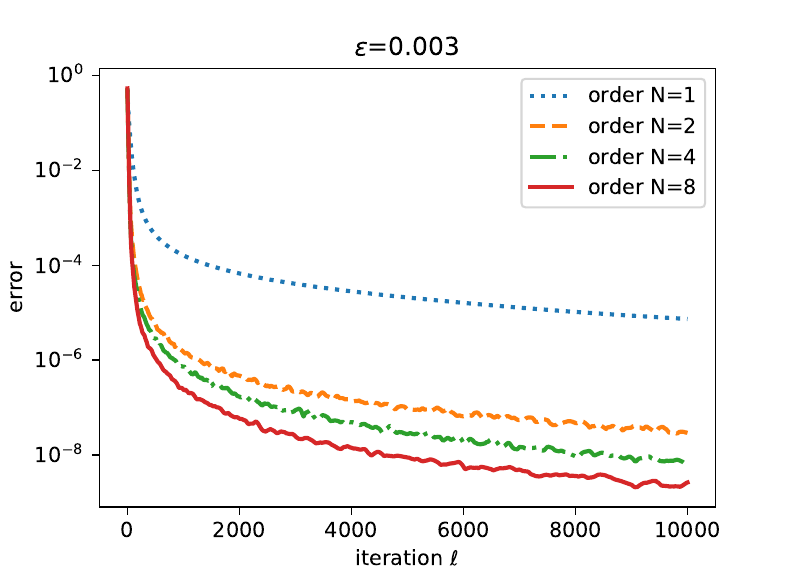}
    \end{tabular}
    \caption{Convergence of RNA schemes for a relaxation parameter $\omega=1$ and different orders $N\in\{1,2,4,8\}$. All approaches lead to  similar convergence than  the original SK algorithm (order $N=1$, with blue dots) for high values of the entropic  parameter such as $\epsilon=0.1$. When facing more challenging regularized optimal transport problems, high order RNA schemes ($N>1$) realize important accelerations. This behaviour is highlighted on the bottom row for $\epsilon=0.01$ and $\epsilon=0.003$. In these settings, with respect to SK,  RNA of  order $N=8$ (plain red curves) improves  with a factor $\times 100$ the number of iterations required to reach the same accuracy.}
    \label{fig:RNA1}
\end{figure}

Next, we focus on the influence of the relaxation parameter $\omega\in\{0.5,1,1.5, 1.9\}$ 
onto the behaviour of RNA schemes of orders  $N\in\{1,2,8\}$. We restrict our analysis to the more challenging settings $\epsilon=0.01$ and $\epsilon=0.003$.
As illustrated in Figure \ref{fig:RNA2}, increasing $\omega$ systematically leads to improvements in the case $N=1$. For other RNA orders satisfying $N>1$, we did not observe clear tendencies. Taking $\omega\approx 1.5$ generally allows to accelerate the convergence. 

We recall that the convergence of such approaches is not ensured.
This last experiment nevertheless suggests than in the case $N=1$, there is room to accelerate the original SK algorithm  ($\omega=1$), while keeping its global convergence guarantees, by looking at overrelaxed schemes with parameters $\omega>1$.

\newcommand{\sidecapY}[1]{ \begin{sideways}\parbox{80pt}{\centering #1}\end{sideways} }
\begin{figure}[ht!]
  \centering\hspace*{-0.1cm}
    \begin{tabular}{cccc}
    \hspace*{-0.2cm}\sidecapY{$\epsilon=0.01$}&\hspace{-0.2cm}
        \begin{adjustbox}{trim=7 2 14 2,clip}\begin{tikzpicture}[spy using outlines={rectangle, white,magnification=3, connect spies}]
		\node {\pgfimage[interpolate=true,width=0.345\textwidth]{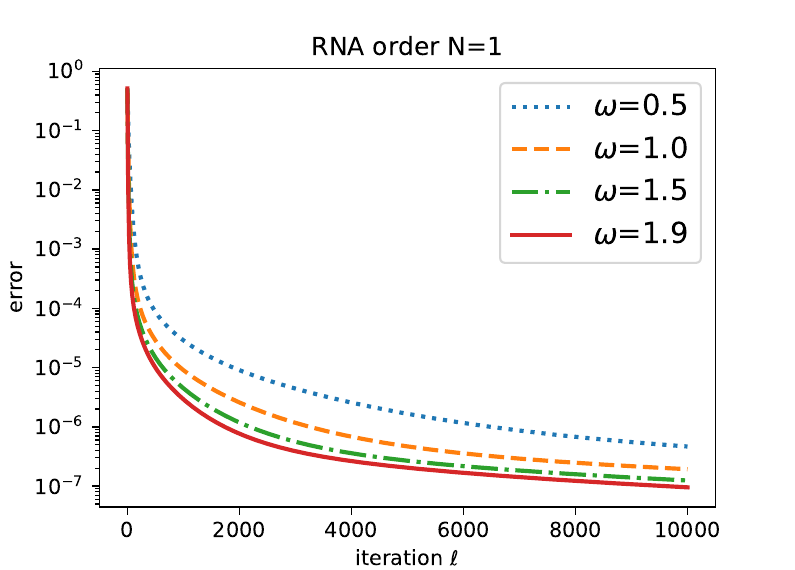}};
			\coordinate (spypoint) at (0.78,-.965);
		\coordinate (spyviewer) at (-0.3,0.45);
		\spy[black,width=1.35cm,height=1.35cm]on (spypoint) in node [fill=white] at (spyviewer);
		\end{tikzpicture}\end{adjustbox}
    &\hspace{-0.5cm}	\begin{adjustbox}{trim=7 2 14 2,clip}\begin{tikzpicture}[spy using outlines={rectangle, white,magnification=3, connect spies}]
		\node {\pgfimage[interpolate=true,width=0.345\textwidth]{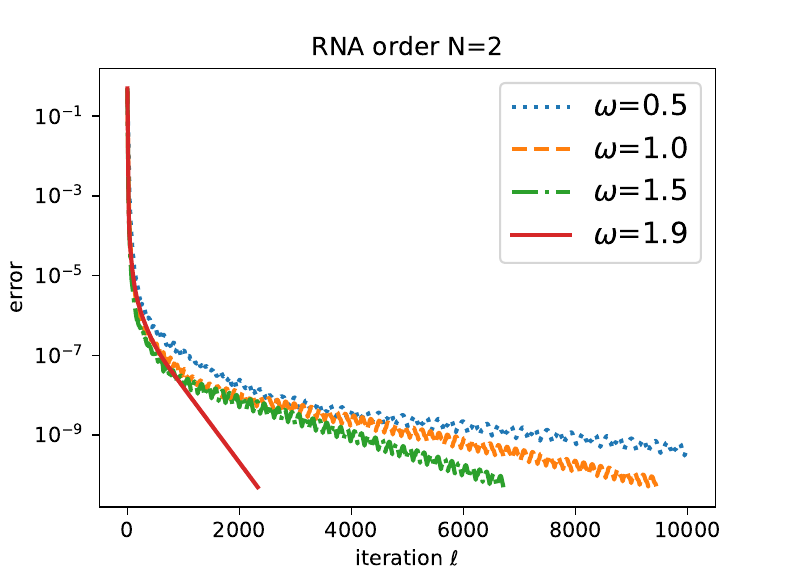}};
			\coordinate (spypoint) at (-1.2,-0.6);
		\coordinate (spyviewer) at (-0.3,0.45);
		\spy[black,width=1.35cm,height=1.35cm]on (spypoint) in node [fill=white] at (spyviewer);
		\end{tikzpicture}\end{adjustbox}&\hspace{-0.5cm}	\begin{adjustbox}{trim=7 2 14 2,clip}\begin{tikzpicture}[spy using outlines={rectangle, white,magnification=3, connect spies}]
		\node {\pgfimage[interpolate=true,width=0.345\textwidth]{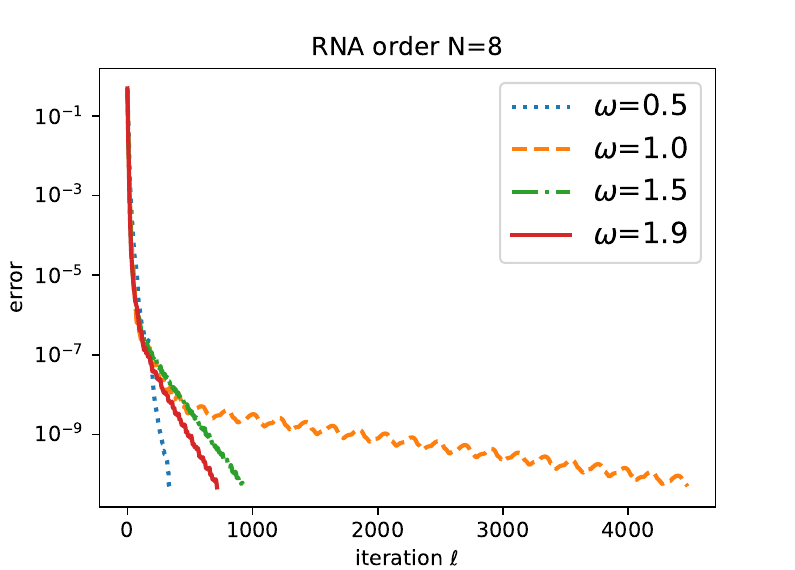}};
			\coordinate (spypoint) at (-1.25,-0.75);
		\coordinate (spyviewer) at (-0.3,0.45);
		\spy[black,width=1.35cm,height=1.35cm]on (spypoint) in node [fill=white] at (spyviewer);
		\end{tikzpicture}\end{adjustbox}\\
    \hspace*{-0.2cm}\sidecapY{$\epsilon=0.003$}&\hspace{-0.2cm}
    \begin{adjustbox}{trim=7 2 14 2,clip}\begin{tikzpicture}[spy using outlines={rectangle, white,magnification=3, connect spies}]
		\node {\pgfimage[interpolate=true,width=0.345\textwidth]{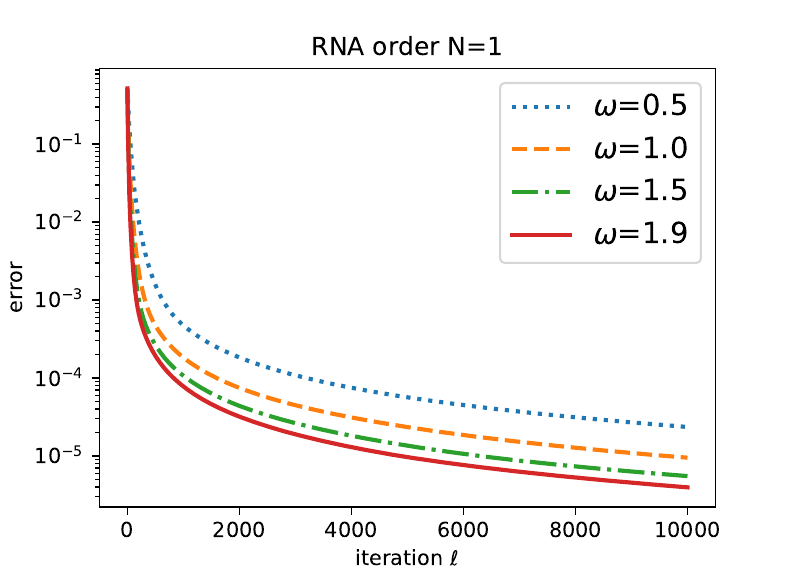}};
			\coordinate (spypoint) at (0.68,-.905);
		\coordinate (spyviewer) at (-0.3,0.45);
		\spy[black,width=1.35cm,height=1.35cm]on (spypoint) in node [fill=white] at (spyviewer);
		\end{tikzpicture}\end{adjustbox}
    &\hspace{-0.5cm}
        \begin{adjustbox}{trim=7 2 14 2,clip}\begin{tikzpicture}[spy using outlines={rectangle, white,magnification=3, connect spies}]
		\node {\pgfimage[interpolate=true,width=0.345\textwidth]{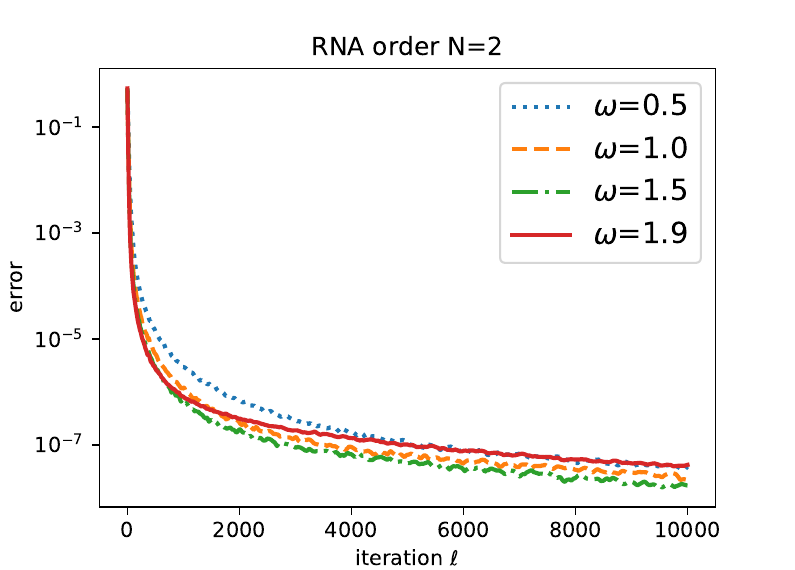}};
			\coordinate (spypoint) at (0.98,-1.09);
		\coordinate (spyviewer) at (-0.3,0.45);
		\spy[black,width=1.35cm,height=1.35cm]on (spypoint) in node [fill=white] at (spyviewer);
		\end{tikzpicture}\end{adjustbox}&\hspace{-0.5cm}
        \begin{adjustbox}{trim=7 2 14 2,clip}\begin{tikzpicture}[spy using outlines={rectangle, white,magnification=3, connect spies}]
		\node {\pgfimage[interpolate=true,width=0.345\textwidth]{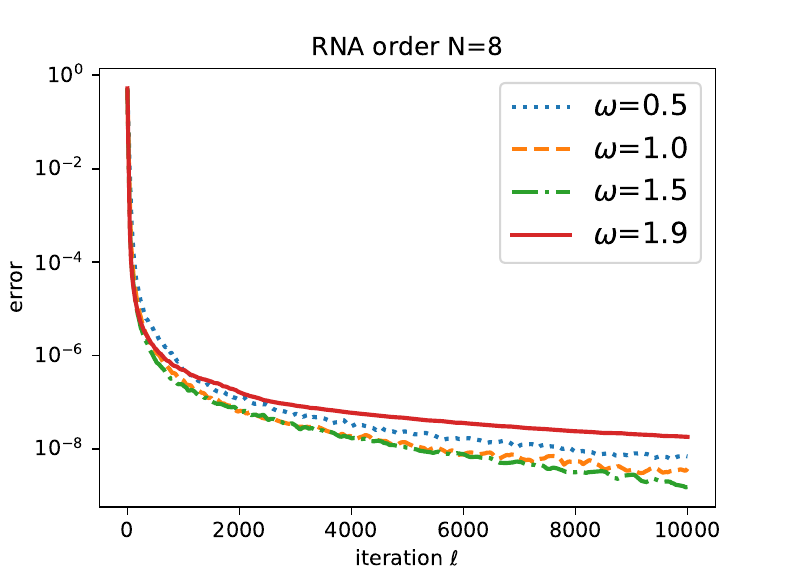}};
			\coordinate (spypoint) at (0.78,-.945);
		\coordinate (spyviewer) at (-0.3,0.45);
		\spy[black,width=1.35cm,height=1.35cm]on (spypoint) in node [fill=white] at (spyviewer);
		\end{tikzpicture}\end{adjustbox}\\
    &$N=1$&$N=2$&$N=8$
    
    \end{tabular}
    \caption{Comparison of RNA schemes on an optimal transport problem regularized with the entropic parameters $\epsilon=0.01$ (top line) and $\epsilon=0.003$ (bottom line). The convergence of  RNA schemes $N\in\{1,2,8\}$ is illustrated for different relaxation parameters $\omega\in\{0.5,1,1.5,1.9\}$. Higher values of $\omega$ lead to larger improvements in the case $N=1$ (first row). When $N>1$  (as in the middle row for $N=2$ the right row for $N=8$), it is not possible to conclude and to suggest a choice for the parameter $\omega$ with the obtained numerical results.}
    \label{fig:RNA2}
\end{figure}

\newpage
\subsection{Discussion}
The nonlinear regularized acceleration algorithm involves relevant numerical accelerations without convergence guarantees. 
To build an algorithm that ensures the convergence of iterates but also improves the numerical behavior of the SK algorithm, we now propose to follow a different approach using Lyapunov sequences, which is a classical tool to study optimization algorithms. The new scheme proposed here uses 
the specific form of the SK algorithm with a set up of the two variables $\da$ and $\db$. It performs two Successive OverRelaxions (SOR) at each step, one for the set up of $\da$ and one for $\db$. The algorithm does not use any mixing scheme but the simple structure allows to define a sequence, called a Lyapunov sequence, which decreases at each step. This Lyapunov approach allows to ensure the convergence of the algorithm for a suitable choice of the overrelaxation parameter.

The algorithm can be summarized as follow : 
\begin{align}\label{SORdual0}
	\da^{\ell+1} &= (1-\omega)\da^{\ell} + \omega \argmax_\da E(\da,\db^{\ell})\\
	\db^{\ell + 1}&=(1-\omega) \db^{\ell} + \omega \argmax_\db E(\da^{\ell+1},\db).
\end{align}

Our convergence analysis will rely on an online adaptation of an overrelaxation parameter $\omega\in[1,2)$. As illustrated by Figure \ref{fig:compRNA_SOR}, in the case for $\epsilon=0.01$, the proposed SOR method  is not as performant as high RNA orders $N>1$ with $\omega=1.5$. It nevertheless gives an important improvement with respect to the original SK method, while being provably convergent.
\begin{figure}[ht!]
    \centering
    \includegraphics[width=0.6\textwidth]{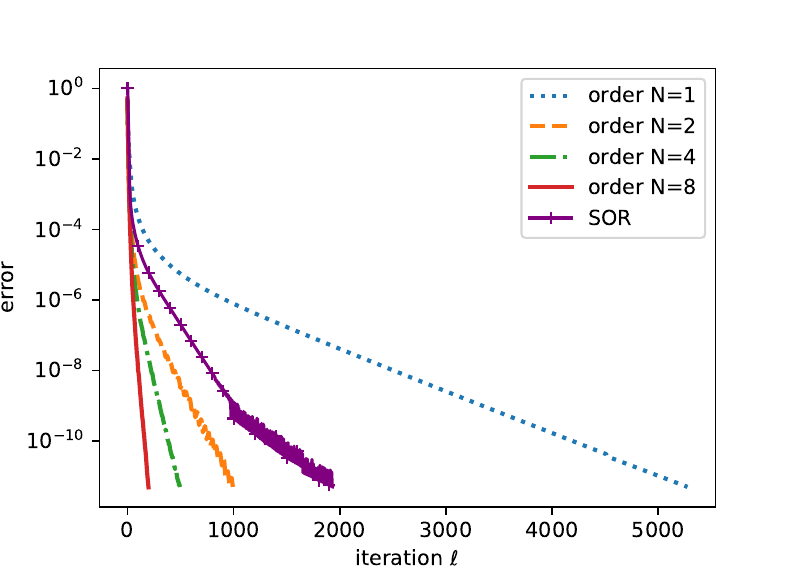}
    \caption{Comparison between RNA schemes with $\omega=1.5$ and SOR for a transport regularized with $\epsilon=0.01$. The SOR  performance is in between the ones of RNA of orders $N=1$ and $N=2$.}
    \label{fig:compRNA_SOR}
\end{figure}

\section{Overrelaxed Sinkhorn--Knopp algorithm}
In this section, we propose a globally convergent overrelaxed SK algorithm. Different from the RNA point of view of the previous section, our algorithm relies on successive overrelaxed (SOR) projections.

As illustrated in Figure \ref{alternate_projections} (a-b), the original SK algorithm \eqref{eq:update_SK} performs alternate Bregman projections \eqref{scaling} onto the affine sets $\Ccal_1$ and $\Ccal_2$. In practice, the convergence may degrade  when $\epsilon\to 0$. The idea developed in this section  is to perform overrelaxed projections in order to accelerate the process, as displayed in Figure \ref{alternate_projections} (c).

\begin{figure}[ht!]
	\centering
	\begin{minipage}[b]{.33\textwidth}
		\centering
		\begin{tikzpicture}
\fill (-0.500000,4.000000) circle (2pt) node[above] (gamma0) {$\gamma^0$};
\fill (0,0) circle (2pt) node[right] {$\gamma^*$};
\draw[dashed] (-0.500000,4.000000) -- (0.000000,4.000000)
(-1.723077,3.015385) -- (0.000000,4.000000)
(-1.723077,3.015385) -- (0.000000,3.015385)
(-1.298935,2.273136) -- (0.000000,3.015385)
(-1.298935,2.273136) -- (0.000000,2.273136)
(-0.979197,1.713595) -- (0.000000,2.273136)
(-0.979197,1.713595) -- (0.000000,1.713595)
(-0.738164,1.291787) -- (0.000000,1.713595)
(-0.738164,1.291787) -- (0.000000,1.291787)
(-0.556462,0.973809) -- (0.000000,1.291787)
(-0.556462,0.973809) -- (0.000000,0.973809)
(-0.419487,0.734102) -- (0.000000,0.973809)
(-0.419487,0.734102) -- (0.000000,0.734102)
(-0.316228,0.553400) -- (0.000000,0.734102)
(-0.316228,0.553400) -- (0.000000,0.553400)
(-0.238388,0.417178) -- (0.000000,0.553400)
(-0.238388,0.417178) -- (0.000000,0.417178)
(-0.179708,0.314488) -- (0.000000,0.417178)
(-0.179708,0.314488) -- (0.000000,0.314488)
(-0.135472,0.237076) -- (0.000000,0.314488)
(-0.135472,0.237076) -- (0.000000,0.237076)
(-0.102125,0.178719) -- (0.000000,0.237076)
(-0.102125,0.178719) -- (0.000000,0.178719)
(-0.076987,0.134726) -- (0.000000,0.178719)
(-0.076987,0.134726) -- (0.000000,0.134726)
(-0.058036,0.101563) -- (0.000000,0.134726)
(-0.058036,0.101563) -- (0.000000,0.101563)
(-0.043750,0.076563) -- (0.000000,0.101563)
(-0.043750,0.076563) -- (0.000000,0.076563)
(-0.032981,0.057717) -- (0.000000,0.076563)
(-0.032981,0.057717) -- (0.000000,0.057717)
(-0.024863,0.043509) -- (0.000000,0.057717)
(-0.024863,0.043509) -- (0.000000,0.043509)
(-0.018743,0.032799) -- (0.000000,0.043509)
(-0.018743,0.032799) -- (0.000000,0.032799)
(-0.014129,0.024726) -- (0.000000,0.032799)
(-0.014129,0.024726) -- (0.000000,0.024726)
(-0.010651,0.018639) -- (0.000000,0.024726)
(-0.010651,0.018639) -- (0.000000,0.018639)
(-0.008029,0.014051) -- (0.000000,0.018639)
(-0.008029,0.014051) -- (0.000000,0.014051)
(-0.006053,0.010592) -- (0.000000,0.014051)
(-0.006053,0.010592) -- (0.000000,0.010592)
(-0.004563,0.007985) -- (0.000000,0.010592)
(-0.004563,0.007985) -- (0.000000,0.007985)
(-0.003440,0.006020) -- (0.000000,0.007985)
(-0.003440,0.006020) -- (0.000000,0.006020)
(-0.002593,0.004538) -- (0.000000,0.006020)
(-0.002593,0.004538) -- (0.000000,0.004538)
(-0.001955,0.003421) -- (0.000000,0.004538)
(-0.001955,0.003421) -- (0.000000,0.003421)
(-0.001474,0.002579) -- (0.000000,0.003421)
(-0.001474,0.002579) -- (0.000000,0.002579)
(-0.001111,0.001944) -- (0.000000,0.002579)
(-0.001111,0.001944) -- (0.000000,0.001944)
(-0.000837,0.001465) -- (0.000000,0.001944)
(-0.000837,0.001465) -- (0.000000,0.001465)
(-0.000631,0.001105) -- (0.000000,0.001465)
(-0.000631,0.001105) -- (0.000000,0.001105)
(-0.000476,0.000833) -- (0.000000,0.001105)
(-0.000476,0.000833) -- (0.000000,0.000833)
(-0.000359,0.000628) -- (0.000000,0.000833)
(-0.000359,0.000628) -- (0.000000,0.000628)
(-0.000270,0.000473) -- (0.000000,0.000628)
(-0.000270,0.000473) -- (0.000000,0.000473)
(-0.000204,0.000357) -- (0.000000,0.000473)
(-0.000204,0.000357) -- (0.000000,0.000357)
(-0.000154,0.000269) -- (0.000000,0.000357)
(-0.000154,0.000269) -- (0.000000,0.000269)
(-0.000116,0.000203) -- (0.000000,0.000269)
(-0.000116,0.000203) -- (0.000000,0.000203)
(-0.000087,0.000153) -- (0.000000,0.000203)
(-0.000087,0.000153) -- (0.000000,0.000153)
(-0.000066,0.000115) -- (0.000000,0.000153)
(-0.000066,0.000115) -- (0.000000,0.000115)
(-0.000050,0.000087) -- (0.000000,0.000115)
(-0.000050,0.000087) -- (0.000000,0.000087)
(-0.000037,0.000065) -- (0.000000,0.000087)
(-0.000037,0.000065) -- (0.000000,0.000065)
(-0.000028,0.000049) -- (0.000000,0.000065)
(-0.000028,0.000049) -- (0.000000,0.000049)
(-0.000021,0.000037) -- (0.000000,0.000049)
(-0.000021,0.000037) -- (0.000000,0.000037)
(-0.000016,0.000028) -- (0.000000,0.000037)
(-0.000016,0.000028) -- (0.000000,0.000028)
(-0.000012,0.000021) -- (0.000000,0.000028)
(-0.000012,0.000021) -- (0.000000,0.000021)
(-0.000009,0.000016) -- (0.000000,0.000021)
(-0.000009,0.000016) -- (0.000000,0.000016)
(-0.000007,0.000012) -- (0.000000,0.000016)
(-0.000007,0.000012) -- (0.000000,0.000012)
(-0.000005,0.000009) -- (0.000000,0.000012)
(-0.000005,0.000009) -- (0.000000,0.000009)
(-0.000004,0.000007) -- (0.000000,0.000009)
(-0.000004,0.000007) -- (0.000000,0.000007)
(-0.000003,0.000005) -- (0.000000,0.000007)
(-0.000003,0.000005) -- (0.000000,0.000005)
(-0.000002,0.000004) -- (0.000000,0.000005)
(-0.000002,0.000004) -- (0.000000,0.000004)
(-0.000002,0.000003) -- (0.000000,0.000004)
(-0.000002,0.000003) -- (0.000000,0.000003)
(-0.000001,0.000002) -- (0.000000,0.000003)
(-0.000001,0.000002) -- (0.000000,0.000002)
(-0.000001,0.000002) -- (0.000000,0.000002)
(-0.000001,0.000002) -- (0.000000,0.000002)
(-0.000001,0.000001) -- (0.000000,0.000002)
(-0.000001,0.000001) -- (0.000000,0.000001)
(-0.000001,0.000001) -- (0.000000,0.000001)
(-0.000001,0.000001) -- (0.000000,0.000001)
(-0.000000,0.000001) -- (0.000000,0.000001)
(-0.000000,0.000001) -- (0.000000,0.000001)
(-0.000000,0.000001) -- (0.000000,0.000001)
(-0.000000,0.000001) -- (0.000000,0.000001)
(-0.000000,0.000000) -- (0.000000,0.000001)
(-0.000000,0.000000) -- (0.000000,0.000000)
(-0.000000,0.000000) -- (0.000000,0.000000)
(-0.000000,0.000000) -- (0.000000,0.000000)
(-0.000000,0.000000) -- (0.000000,0.000000)
(-0.000000,0.000000) -- (0.000000,0.000000)
(-0.000000,0.000000) -- (0.000000,0.000000)
(-0.000000,0.000000) -- (0.000000,0.000000)
(-0.000000,0.000000) -- (0.000000,0.000000)
(-0.000000,0.000000) -- (0.000000,0.000000)
(-0.000000,0.000000) -- (0.000000,0.000000)
(-0.000000,0.000000) -- (0.000000,0.000000)
(-0.000000,0.000000) -- (0.000000,0.000000)
(-0.000000,0.000000) -- (0.000000,0.000000)
(-0.000000,0.000000) -- (0.000000,0.000000)
(-0.000000,0.000000) -- (0.000000,0.000000)
(-0.000000,0.000000) -- (0.000000,0.000000)
(-0.000000,0.000000) -- (0.000000,0.000000)
(-0.000000,0.000000) -- (0.000000,0.000000)
(-0.000000,0.000000) -- (0.000000,0.000000)
(-0.000000,0.000000) -- (0.000000,0.000000)
(-0.000000,0.000000) -- (0.000000,0.000000)
(-0.000000,0.000000) -- (0.000000,0.000000)
(-0.000000,0.000000) -- (0.000000,0.000000)
(-0.000000,0.000000) -- (0.000000,0.000000)
(-0.000000,0.000000) -- (0.000000,0.000000)
(-0.000000,0.000000) -- (0.000000,0.000000)
(-0.000000,0.000000) -- (0.000000,0.000000)
(-0.000000,0.000000) -- (0.000000,0.000000)
(-0.000000,0.000000) -- (0.000000,0.000000)
(-0.000000,0.000000) -- (0.000000,0.000000)
(-0.000000,0.000000) -- (0.000000,0.000000)
(-0.000000,0.000000) -- (0.000000,0.000000)
(-0.000000,0.000000) -- (0.000000,0.000000)
(-0.000000,0.000000) -- (0.000000,0.000000)
(-0.000000,0.000000) -- (0.000000,0.000000)
(-0.000000,0.000000) -- (0.000000,0.000000)
(-0.000000,0.000000) -- (0.000000,0.000000)
(-0.000000,0.000000) -- (0.000000,0.000000)
(-0.000000,0.000000) -- (0.000000,0.000000)
(-0.000000,0.000000) -- (0.000000,0.000000)
(-0.000000,0.000000) -- (0.000000,0.000000)
(-0.000000,0.000000) -- (0.000000,0.000000)
(-0.000000,0.000000) -- (0.000000,0.000000)
(-0.000000,0.000000) -- (0.000000,0.000000)
(-0.000000,0.000000) -- (0.000000,0.000000)
(-0.000000,0.000000) -- (0.000000,0.000000)
(-0.000000,0.000000) -- (0.000000,0.000000)
(-0.000000,0.000000) -- (0.000000,0.000000)
(-0.000000,0.000000) -- (0.000000,0.000000)
(-0.000000,0.000000) -- (0.000000,0.000000)
(-0.000000,0.000000) -- (0.000000,0.000000)
(-0.000000,0.000000) -- (0.000000,0.000000)
(-0.000000,0.000000) -- (0.000000,0.000000)
(-0.000000,0.000000) -- (0.000000,0.000000)
(-0.000000,0.000000) -- (0.000000,0.000000)
(-0.000000,0.000000) -- (0.000000,0.000000)
(-0.000000,0.000000) -- (0.000000,0.000000)
(-0.000000,0.000000) -- (0.000000,0.000000)
(-0.000000,0.000000) -- (0.000000,0.000000)
(-0.000000,0.000000) -- (0.000000,0.000000)
(-0.000000,0.000000) -- (0.000000,0.000000)
(-0.000000,0.000000) -- (0.000000,0.000000)
(-0.000000,0.000000) -- (0.000000,0.000000)
(-0.000000,0.000000) -- (0.000000,0.000000)
(-0.000000,0.000000) -- (0.000000,0.000000)
(-0.000000,0.000000) -- (0.000000,0.000000)
(-0.000000,0.000000) -- (0.000000,0.000000)
(-0.000000,0.000000) -- (0.000000,0.000000)
(-0.000000,0.000000) -- (0.000000,0.000000)
(-0.000000,0.000000) -- (0.000000,0.000000)
(-0.000000,0.000000) -- (0.000000,0.000000)
(-0.000000,0.000000) -- (0.000000,0.000000)
(-0.000000,0.000000) -- (0.000000,0.000000)
(-0.000000,0.000000) -- (0.000000,0.000000)
(-0.000000,0.000000) -- (0.000000,0.000000)
(-0.000000,0.000000) -- (0.000000,0.000000)
(-0.000000,0.000000) -- (0.000000,0.000000)
(-0.000000,0.000000) -- (0.000000,0.000000)
(-0.000000,0.000000) -- (0.000000,0.000000)
(-0.000000,0.000000) -- (0.000000,0.000000)
(-0.000000,0.000000) -- (0.000000,0.000000)
(-0.000000,0.000000) -- (0.000000,0.000000)
(-0.000000,0.000000) -- (0.000000,0.000000)
(-0.000000,0.000000) -- (0.000000,0.000000)
(-0.000000,0.000000) -- (0.000000,0.000000)
(-0.000000,0.000000) -- (0.000000,0.000000)
;
\draw (-0.000000,-0.400000) -- (0.000000,4.400000) node[right] {$\mathcal{C}_1$};
\draw (0.228571,-0.400000) -- (-2.514286,4.400000) node[right] {$\mathcal{C}_2$};
\end{tikzpicture}
		\subcaption{}\label{fig:schema_a}
	\end{minipage}%
	\begin{minipage}[b]{.33\textwidth}
		\centering
		\begin{tikzpicture}
\fill (-0.500000,4.000000) circle (2pt) node[above] (gamma0) {$\gamma^0$};
\fill (0,0) circle (2pt) node[right] {$\gamma^*$};
\draw[dashed] (-0.500000,4.000000) -- (0.000000,4.000000)
(-0.995851,3.734440) -- (0.000000,4.000000)
(-0.995851,3.734440) -- (0.000000,3.734440)
(-0.929736,3.486510) -- (0.000000,3.734440)
(-0.929736,3.486510) -- (0.000000,3.486510)
(-0.868011,3.255041) -- (0.000000,3.486510)
(-0.868011,3.255041) -- (0.000000,3.255041)
(-0.810384,3.038938) -- (0.000000,3.255041)
(-0.810384,3.038938) -- (0.000000,3.038938)
(-0.756582,2.837183) -- (0.000000,3.038938)
(-0.756582,2.837183) -- (0.000000,2.837183)
(-0.706353,2.648822) -- (0.000000,2.837183)
(-0.706353,2.648822) -- (0.000000,2.648822)
(-0.659458,2.472967) -- (0.000000,2.648822)
(-0.659458,2.472967) -- (0.000000,2.472967)
(-0.615676,2.308787) -- (0.000000,2.472967)
(-0.615676,2.308787) -- (0.000000,2.308787)
(-0.574802,2.155506) -- (0.000000,2.308787)
(-0.574802,2.155506) -- (0.000000,2.155506)
(-0.536641,2.012402) -- (0.000000,2.155506)
(-0.536641,2.012402) -- (0.000000,2.012402)
(-0.501013,1.878799) -- (0.000000,2.012402)
(-0.501013,1.878799) -- (0.000000,1.878799)
(-0.467751,1.754065) -- (0.000000,1.878799)
(-0.467751,1.754065) -- (0.000000,1.754065)
(-0.436697,1.637613) -- (0.000000,1.754065)
(-0.436697,1.637613) -- (0.000000,1.637613)
(-0.407704,1.528891) -- (0.000000,1.637613)
(-0.407704,1.528891) -- (0.000000,1.528891)
(-0.380637,1.427388) -- (0.000000,1.528891)
(-0.380637,1.427388) -- (0.000000,1.427388)
(-0.355366,1.332624) -- (0.000000,1.427388)
(-0.355366,1.332624) -- (0.000000,1.332624)
(-0.331774,1.244151) -- (0.000000,1.332624)
(-0.331774,1.244151) -- (0.000000,1.244151)
(-0.309747,1.161552) -- (0.000000,1.244151)
(-0.309747,1.161552) -- (0.000000,1.161552)
(-0.289183,1.084436) -- (0.000000,1.161552)
(-0.289183,1.084436) -- (0.000000,1.084436)
(-0.269984,1.012440) -- (0.000000,1.084436)
(-0.269984,1.012440) -- (0.000000,1.012440)
(-0.252060,0.945225) -- (0.000000,1.012440)
(-0.252060,0.945225) -- (0.000000,0.945225)
(-0.235326,0.882471) -- (0.000000,0.945225)
(-0.235326,0.882471) -- (0.000000,0.882471)
(-0.219702,0.823884) -- (0.000000,0.882471)
(-0.219702,0.823884) -- (0.000000,0.823884)
(-0.205116,0.769186) -- (0.000000,0.823884)
(-0.205116,0.769186) -- (0.000000,0.769186)
(-0.191499,0.718120) -- (0.000000,0.769186)
(-0.191499,0.718120) -- (0.000000,0.718120)
(-0.178785,0.670444) -- (0.000000,0.718120)
(-0.178785,0.670444) -- (0.000000,0.670444)
(-0.166915,0.625933) -- (0.000000,0.670444)
(-0.166915,0.625933) -- (0.000000,0.625933)
(-0.155834,0.584377) -- (0.000000,0.625933)
(-0.155834,0.584377) -- (0.000000,0.584377)
(-0.145488,0.545580) -- (0.000000,0.584377)
(-0.145488,0.545580) -- (0.000000,0.545580)
(-0.135829,0.509359) -- (0.000000,0.545580)
(-0.135829,0.509359) -- (0.000000,0.509359)
(-0.126811,0.475543) -- (0.000000,0.509359)
(-0.126811,0.475543) -- (0.000000,0.475543)
(-0.118392,0.443972) -- (0.000000,0.475543)
(-0.118392,0.443972) -- (0.000000,0.443972)
(-0.110532,0.414496) -- (0.000000,0.443972)
(-0.110532,0.414496) -- (0.000000,0.414496)
(-0.103194,0.386978) -- (0.000000,0.414496)
(-0.103194,0.386978) -- (0.000000,0.386978)
(-0.096343,0.361286) -- (0.000000,0.386978)
(-0.096343,0.361286) -- (0.000000,0.361286)
(-0.089947,0.337301) -- (0.000000,0.361286)
(-0.089947,0.337301) -- (0.000000,0.337301)
(-0.083975,0.314907) -- (0.000000,0.337301)
(-0.083975,0.314907) -- (0.000000,0.314907)
(-0.078400,0.294000) -- (0.000000,0.314907)
(-0.078400,0.294000) -- (0.000000,0.294000)
(-0.073195,0.274482) -- (0.000000,0.294000)
(-0.073195,0.274482) -- (0.000000,0.274482)
(-0.068336,0.256259) -- (0.000000,0.274482)
(-0.068336,0.256259) -- (0.000000,0.256259)
(-0.063799,0.239246) -- (0.000000,0.256259)
(-0.063799,0.239246) -- (0.000000,0.239246)
(-0.059563,0.223362) -- (0.000000,0.239246)
(-0.059563,0.223362) -- (0.000000,0.223362)
(-0.055609,0.208533) -- (0.000000,0.223362)
(-0.055609,0.208533) -- (0.000000,0.208533)
(-0.051917,0.194689) -- (0.000000,0.208533)
(-0.051917,0.194689) -- (0.000000,0.194689)
(-0.048470,0.181763) -- (0.000000,0.194689)
(-0.048470,0.181763) -- (0.000000,0.181763)
(-0.045252,0.169696) -- (0.000000,0.181763)
(-0.045252,0.169696) -- (0.000000,0.169696)
(-0.042248,0.158430) -- (0.000000,0.169696)
(-0.042248,0.158430) -- (0.000000,0.158430)
(-0.039443,0.147912) -- (0.000000,0.158430)
(-0.039443,0.147912) -- (0.000000,0.147912)
(-0.036825,0.138092) -- (0.000000,0.147912)
(-0.036825,0.138092) -- (0.000000,0.138092)
(-0.034380,0.128924) -- (0.000000,0.138092)
(-0.034380,0.128924) -- (0.000000,0.128924)
(-0.032097,0.120365) -- (0.000000,0.128924)
(-0.032097,0.120365) -- (0.000000,0.120365)
(-0.029966,0.112374) -- (0.000000,0.120365)
(-0.029966,0.112374) -- (0.000000,0.112374)
(-0.027977,0.104913) -- (0.000000,0.112374)
(-0.027977,0.104913) -- (0.000000,0.104913)
(-0.026119,0.097948) -- (0.000000,0.104913)
(-0.026119,0.097948) -- (0.000000,0.097948)
(-0.024385,0.091445) -- (0.000000,0.097948)
(-0.024385,0.091445) -- (0.000000,0.091445)
(-0.022766,0.085374) -- (0.000000,0.091445)
(-0.022766,0.085374) -- (0.000000,0.085374)
(-0.021255,0.079706) -- (0.000000,0.085374)
(-0.021255,0.079706) -- (0.000000,0.079706)
(-0.019844,0.074415) -- (0.000000,0.079706)
(-0.019844,0.074415) -- (0.000000,0.074415)
(-0.018526,0.069474) -- (0.000000,0.074415)
(-0.018526,0.069474) -- (0.000000,0.069474)
(-0.017296,0.064862) -- (0.000000,0.069474)
(-0.017296,0.064862) -- (0.000000,0.064862)
(-0.016148,0.060556) -- (0.000000,0.064862)
(-0.016148,0.060556) -- (0.000000,0.060556)
(-0.015076,0.056535) -- (0.000000,0.060556)
(-0.015076,0.056535) -- (0.000000,0.056535)
(-0.014075,0.052782) -- (0.000000,0.056535)
(-0.014075,0.052782) -- (0.000000,0.052782)
(-0.013141,0.049278) -- (0.000000,0.052782)
(-0.013141,0.049278) -- (0.000000,0.049278)
(-0.012268,0.046006) -- (0.000000,0.049278)
(-0.012268,0.046006) -- (0.000000,0.046006)
(-0.011454,0.042952) -- (0.000000,0.046006)
(-0.011454,0.042952) -- (0.000000,0.042952)
(-0.010693,0.040100) -- (0.000000,0.042952)
(-0.010693,0.040100) -- (0.000000,0.040100)
(-0.009983,0.037438) -- (0.000000,0.040100)
(-0.009983,0.037438) -- (0.000000,0.037438)
(-0.009321,0.034952) -- (0.000000,0.037438)
(-0.009321,0.034952) -- (0.000000,0.034952)
(-0.008702,0.032632) -- (0.000000,0.034952)
(-0.008702,0.032632) -- (0.000000,0.032632)
(-0.008124,0.030466) -- (0.000000,0.032632)
(-0.008124,0.030466) -- (0.000000,0.030466)
(-0.007585,0.028443) -- (0.000000,0.030466)
(-0.007585,0.028443) -- (0.000000,0.028443)
(-0.007081,0.026555) -- (0.000000,0.028443)
(-0.007081,0.026555) -- (0.000000,0.026555)
(-0.006611,0.024792) -- (0.000000,0.026555)
(-0.006611,0.024792) -- (0.000000,0.024792)
(-0.006172,0.023146) -- (0.000000,0.024792)
(-0.006172,0.023146) -- (0.000000,0.023146)
(-0.005762,0.021609) -- (0.000000,0.023146)
(-0.005762,0.021609) -- (0.000000,0.021609)
(-0.005380,0.020174) -- (0.000000,0.021609)
(-0.005380,0.020174) -- (0.000000,0.020174)
(-0.005023,0.018835) -- (0.000000,0.020174)
(-0.005023,0.018835) -- (0.000000,0.018835)
(-0.004689,0.017585) -- (0.000000,0.018835)
(-0.004689,0.017585) -- (0.000000,0.017585)
(-0.004378,0.016417) -- (0.000000,0.017585)
(-0.004378,0.016417) -- (0.000000,0.016417)
(-0.004087,0.015327) -- (0.000000,0.016417)
(-0.004087,0.015327) -- (0.000000,0.015327)
(-0.003816,0.014310) -- (0.000000,0.015327)
(-0.003816,0.014310) -- (0.000000,0.014310)
(-0.003563,0.013360) -- (0.000000,0.014310)
(-0.003563,0.013360) -- (0.000000,0.013360)
(-0.003326,0.012473) -- (0.000000,0.013360)
(-0.003326,0.012473) -- (0.000000,0.012473)
(-0.003105,0.011645) -- (0.000000,0.012473)
(-0.003105,0.011645) -- (0.000000,0.011645)
(-0.002899,0.010872) -- (0.000000,0.011645)
(-0.002899,0.010872) -- (0.000000,0.010872)
(-0.002707,0.010150) -- (0.000000,0.010872)
(-0.002707,0.010150) -- (0.000000,0.010150)
(-0.002527,0.009476) -- (0.000000,0.010150)
(-0.002527,0.009476) -- (0.000000,0.009476)
(-0.002359,0.008847) -- (0.000000,0.009476)
(-0.002359,0.008847) -- (0.000000,0.008847)
(-0.002203,0.008259) -- (0.000000,0.008847)
(-0.002203,0.008259) -- (0.000000,0.008259)
(-0.002056,0.007711) -- (0.000000,0.008259)
(-0.002056,0.007711) -- (0.000000,0.007711)
(-0.001920,0.007199) -- (0.000000,0.007711)
(-0.001920,0.007199) -- (0.000000,0.007199)
(-0.001792,0.006721) -- (0.000000,0.007199)
(-0.001792,0.006721) -- (0.000000,0.006721)
(-0.001673,0.006275) -- (0.000000,0.006721)
(-0.001673,0.006275) -- (0.000000,0.006275)
(-0.001562,0.005858) -- (0.000000,0.006275)
(-0.001562,0.005858) -- (0.000000,0.005858)
(-0.001459,0.005469) -- (0.000000,0.005858)
(-0.001459,0.005469) -- (0.000000,0.005469)
(-0.001362,0.005106) -- (0.000000,0.005469)
(-0.001362,0.005106) -- (0.000000,0.005106)
(-0.001271,0.004767) -- (0.000000,0.005106)
(-0.001271,0.004767) -- (0.000000,0.004767)
(-0.001187,0.004451) -- (0.000000,0.004767)
(-0.001187,0.004451) -- (0.000000,0.004451)
(-0.001108,0.004155) -- (0.000000,0.004451)
;
\draw (-0.000000,-0.400000) -- (0.000000,4.400000) node[right] {$\mathcal{C}_1$};
\draw (0.106667,-0.400000) -- (-1.173333,4.400000) node[right] {$\mathcal{C}_2$};
\end{tikzpicture}
		\subcaption{}\label{fig:schema_b}
	\end{minipage}%
	\begin{minipage}[b]{.33\textwidth}
		\centering
		\begin{tikzpicture}
\fill (-0.500000,4.000000) circle (2pt) node[above] (gamma0) {$\gamma^0$};
\fill (0,0) circle (2pt) node[right] {$\gamma^*$};
\draw[dashed] (-0.500000,4.000000) -- (0.350000,4.000000)
(-1.898444,3.400415) -- (0.350000,4.000000)
(-1.898444,3.400415) -- (1.328911,3.400415)
(-2.219432,2.454190) -- (1.328911,3.400415)
(-2.219432,2.454190) -- (1.553603,2.454190)
(-1.950880,1.519661) -- (1.553603,2.454190)
(-1.950880,1.519661) -- (1.365616,1.519661)
(-1.444980,0.770169) -- (1.365616,1.519661)
(-1.444980,0.770169) -- (1.011486,0.770169)
(-0.919844,0.255148) -- (1.011486,0.770169)
(-0.919844,0.255148) -- (0.643891,0.255148)
(-0.486040,-0.046167) -- (0.643891,0.255148)
(-0.486040,-0.046167) -- (0.340228,-0.046167)
(-0.180221,-0.184954) -- (0.340228,-0.046167)
(-0.180221,-0.184954) -- (0.126154,-0.184954)
(0.004209,-0.217472) -- (0.126154,-0.184954)
(0.004209,-0.217472) -- (-0.002946,-0.217472)
(0.093772,-0.191681) -- (-0.002946,-0.217472)
(0.093772,-0.191681) -- (-0.065641,-0.191681)
(0.119666,-0.142266) -- (-0.065641,-0.191681)
(0.119666,-0.142266) -- (-0.083766,-0.142266)
(0.109394,-0.090756) -- (-0.083766,-0.142266)
(0.109394,-0.090756) -- (-0.076576,-0.090756)
(0.083372,-0.048103) -- (-0.076576,-0.090756)
(0.083372,-0.048103) -- (-0.058360,-0.048103)
(0.054625,-0.017974) -- (-0.058360,-0.048103)
(0.054625,-0.017974) -- (-0.038237,-0.017974)
(0.030058,0.000238) -- (-0.038237,-0.017974)
(0.030058,0.000238) -- (-0.021040,0.000238)
(0.012253,0.009116) -- (-0.021040,0.000238)
(0.012253,0.009116) -- (-0.008577,0.009116)
(0.001178,0.011717) -- (-0.008577,0.009116)
(0.001178,0.011717) -- (-0.000824,0.011717)
(-0.004475,0.010744) -- (-0.000824,0.011717)
(-0.004475,0.010744) -- (0.003133,0.010744)
(-0.006387,0.008205) -- (0.003133,0.010744)
(-0.006387,0.008205) -- (0.004471,0.008205)
(-0.006098,0.005387) -- (0.004471,0.008205)
(-0.006098,0.005387) -- (0.004268,0.005387)
(-0.004786,0.002973) -- (0.004268,0.005387)
(-0.004786,0.002973) -- (0.003350,0.002973)
(-0.003225,0.001219) -- (0.003350,0.002973)
(-0.003225,0.001219) -- (0.002258,0.001219)
(-0.001842,0.000126) -- (0.002258,0.001219)
(-0.001842,0.000126) -- (0.001289,0.000126)
(-0.000810,-0.000434) -- (0.001289,0.000126)
(-0.000810,-0.000434) -- (0.000567,-0.000434)
(-0.000149,-0.000625) -- (0.000567,-0.000434)
(-0.000149,-0.000625) -- (0.000105,-0.000625)
(0.000203,-0.000599) -- (0.000105,-0.000625)
(0.000203,-0.000599) -- (-0.000142,-0.000599)
(0.000337,-0.000471) -- (-0.000142,-0.000599)
(0.000337,-0.000471) -- (-0.000236,-0.000471)
(0.000338,-0.000318) -- (-0.000236,-0.000471)
(0.000338,-0.000318) -- (-0.000236,-0.000318)
(0.000273,-0.000182) -- (-0.000236,-0.000318)
(0.000273,-0.000182) -- (-0.000191,-0.000182)
(0.000189,-0.000080) -- (-0.000191,-0.000182)
(0.000189,-0.000080) -- (-0.000133,-0.000080)
(0.000112,-0.000015) -- (-0.000133,-0.000080)
(0.000112,-0.000015) -- (-0.000078,-0.000015)
(0.000052,0.000020) -- (-0.000078,-0.000015)
(0.000052,0.000020) -- (-0.000037,0.000020)
(0.000013,0.000033) -- (-0.000037,0.000020)
(0.000013,0.000033) -- (-0.000009,0.000033)
(-0.000008,0.000033) -- (-0.000009,0.000033)
(-0.000008,0.000033) -- (0.000006,0.000033)
(-0.000018,0.000027) -- (0.000006,0.000033)
(-0.000018,0.000027) -- (0.000012,0.000027)
(-0.000019,0.000019) -- (0.000012,0.000027)
(-0.000019,0.000019) -- (0.000013,0.000019)
(-0.000016,0.000011) -- (0.000013,0.000019)
(-0.000016,0.000011) -- (0.000011,0.000011)
(-0.000011,0.000005) -- (0.000011,0.000011)
(-0.000011,0.000005) -- (0.000008,0.000005)
(-0.000007,0.000001) -- (0.000008,0.000005)
(-0.000007,0.000001) -- (0.000005,0.000001)
(-0.000003,-0.000001) -- (0.000005,0.000001)
(-0.000003,-0.000001) -- (0.000002,-0.000001)
(-0.000001,-0.000002) -- (0.000002,-0.000001)
(-0.000001,-0.000002) -- (0.000001,-0.000002)
(0.000000,-0.000002) -- (0.000001,-0.000002)
(0.000000,-0.000002) -- (-0.000000,-0.000002)
(0.000001,-0.000002) -- (-0.000000,-0.000002)
(0.000001,-0.000002) -- (-0.000001,-0.000002)
(0.000001,-0.000001) -- (-0.000001,-0.000002)
(0.000001,-0.000001) -- (-0.000001,-0.000001)
(0.000001,-0.000001) -- (-0.000001,-0.000001)
(0.000001,-0.000001) -- (-0.000001,-0.000001)
(0.000001,-0.000000) -- (-0.000001,-0.000001)
(0.000001,-0.000000) -- (-0.000000,-0.000000)
(0.000000,-0.000000) -- (-0.000000,-0.000000)
(0.000000,-0.000000) -- (-0.000000,-0.000000)
(0.000000,0.000000) -- (-0.000000,-0.000000)
(0.000000,0.000000) -- (-0.000000,0.000000)
(0.000000,0.000000) -- (-0.000000,0.000000)
(0.000000,0.000000) -- (-0.000000,0.000000)
(-0.000000,0.000000) -- (-0.000000,0.000000)
(-0.000000,0.000000) -- (0.000000,0.000000)
(-0.000000,0.000000) -- (0.000000,0.000000)
(-0.000000,0.000000) -- (0.000000,0.000000)
(-0.000000,0.000000) -- (0.000000,0.000000)
(-0.000000,0.000000) -- (0.000000,0.000000)
(-0.000000,0.000000) -- (0.000000,0.000000)
(-0.000000,0.000000) -- (0.000000,0.000000)
(-0.000000,0.000000) -- (0.000000,0.000000)
(-0.000000,0.000000) -- (0.000000,0.000000)
(-0.000000,0.000000) -- (0.000000,0.000000)
(-0.000000,0.000000) -- (0.000000,0.000000)
(-0.000000,-0.000000) -- (0.000000,0.000000)
(-0.000000,-0.000000) -- (0.000000,-0.000000)
(-0.000000,-0.000000) -- (0.000000,-0.000000)
(-0.000000,-0.000000) -- (0.000000,-0.000000)
(-0.000000,-0.000000) -- (0.000000,-0.000000)
(-0.000000,-0.000000) -- (0.000000,-0.000000)
(0.000000,-0.000000) -- (0.000000,-0.000000)
(0.000000,-0.000000) -- (-0.000000,-0.000000)
(0.000000,-0.000000) -- (-0.000000,-0.000000)
(0.000000,-0.000000) -- (-0.000000,-0.000000)
(0.000000,-0.000000) -- (-0.000000,-0.000000)
(0.000000,-0.000000) -- (-0.000000,-0.000000)
(0.000000,-0.000000) -- (-0.000000,-0.000000)
(0.000000,-0.000000) -- (-0.000000,-0.000000)
(0.000000,-0.000000) -- (-0.000000,-0.000000)
(0.000000,-0.000000) -- (-0.000000,-0.000000)
(0.000000,-0.000000) -- (-0.000000,-0.000000)
(0.000000,-0.000000) -- (-0.000000,-0.000000)
(0.000000,0.000000) -- (-0.000000,-0.000000)
(0.000000,0.000000) -- (-0.000000,0.000000)
(0.000000,0.000000) -- (-0.000000,0.000000)
(0.000000,0.000000) -- (-0.000000,0.000000)
(-0.000000,0.000000) -- (-0.000000,0.000000)
(-0.000000,0.000000) -- (0.000000,0.000000)
(-0.000000,0.000000) -- (0.000000,0.000000)
(-0.000000,0.000000) -- (0.000000,0.000000)
(-0.000000,0.000000) -- (0.000000,0.000000)
(-0.000000,0.000000) -- (0.000000,0.000000)
(-0.000000,0.000000) -- (0.000000,0.000000)
(-0.000000,0.000000) -- (0.000000,0.000000)
(-0.000000,0.000000) -- (0.000000,0.000000)
(-0.000000,0.000000) -- (0.000000,0.000000)
(-0.000000,0.000000) -- (0.000000,0.000000)
(-0.000000,0.000000) -- (0.000000,0.000000)
(-0.000000,-0.000000) -- (0.000000,0.000000)
(-0.000000,-0.000000) -- (0.000000,-0.000000)
(-0.000000,-0.000000) -- (0.000000,-0.000000)
(-0.000000,-0.000000) -- (0.000000,-0.000000)
(0.000000,-0.000000) -- (0.000000,-0.000000)
(0.000000,-0.000000) -- (-0.000000,-0.000000)
(0.000000,-0.000000) -- (-0.000000,-0.000000)
(0.000000,-0.000000) -- (-0.000000,-0.000000)
(0.000000,-0.000000) -- (-0.000000,-0.000000)
(0.000000,-0.000000) -- (-0.000000,-0.000000)
(0.000000,-0.000000) -- (-0.000000,-0.000000)
(0.000000,-0.000000) -- (-0.000000,-0.000000)
(0.000000,-0.000000) -- (-0.000000,-0.000000)
(0.000000,-0.000000) -- (-0.000000,-0.000000)
(0.000000,-0.000000) -- (-0.000000,-0.000000)
(0.000000,-0.000000) -- (-0.000000,-0.000000)
(0.000000,0.000000) -- (-0.000000,-0.000000)
(0.000000,0.000000) -- (-0.000000,0.000000)
(0.000000,0.000000) -- (-0.000000,0.000000)
(0.000000,0.000000) -- (-0.000000,0.000000)
(-0.000000,0.000000) -- (-0.000000,0.000000)
(-0.000000,0.000000) -- (0.000000,0.000000)
(-0.000000,0.000000) -- (0.000000,0.000000)
(-0.000000,0.000000) -- (0.000000,0.000000)
(-0.000000,0.000000) -- (0.000000,0.000000)
(-0.000000,0.000000) -- (0.000000,0.000000)
(-0.000000,0.000000) -- (0.000000,0.000000)
(-0.000000,0.000000) -- (0.000000,0.000000)
(-0.000000,0.000000) -- (0.000000,0.000000)
(-0.000000,0.000000) -- (0.000000,0.000000)
(-0.000000,0.000000) -- (0.000000,0.000000)
(-0.000000,0.000000) -- (0.000000,0.000000)
(-0.000000,-0.000000) -- (0.000000,0.000000)
(-0.000000,-0.000000) -- (0.000000,-0.000000)
(-0.000000,-0.000000) -- (0.000000,-0.000000)
(-0.000000,-0.000000) -- (0.000000,-0.000000)
(0.000000,-0.000000) -- (0.000000,-0.000000)
(0.000000,-0.000000) -- (-0.000000,-0.000000)
(0.000000,-0.000000) -- (-0.000000,-0.000000)
(0.000000,-0.000000) -- (-0.000000,-0.000000)
(0.000000,-0.000000) -- (-0.000000,-0.000000)
(0.000000,-0.000000) -- (-0.000000,-0.000000)
(0.000000,-0.000000) -- (-0.000000,-0.000000)
(0.000000,-0.000000) -- (-0.000000,-0.000000)
(0.000000,-0.000000) -- (-0.000000,-0.000000)
(0.000000,-0.000000) -- (-0.000000,-0.000000)
(0.000000,-0.000000) -- (-0.000000,-0.000000)
(0.000000,-0.000000) -- (-0.000000,-0.000000)
(0.000000,0.000000) -- (-0.000000,-0.000000)
(0.000000,0.000000) -- (-0.000000,0.000000)
(0.000000,0.000000) -- (-0.000000,0.000000)
(0.000000,0.000000) -- (-0.000000,0.000000)
(0.000000,0.000000) -- (-0.000000,0.000000)
(0.000000,0.000000) -- (-0.000000,0.000000)
(-0.000000,0.000000) -- (-0.000000,0.000000)
(-0.000000,0.000000) -- (0.000000,0.000000)
(-0.000000,0.000000) -- (0.000000,0.000000)
;
\draw (-0.000000,-0.400000) -- (0.000000,4.400000) node[right] {$\mathcal{C}_1$};
\draw (0.106667,-0.400000) -- (-1.173333,4.400000) node[right] {$\mathcal{C}_2$};
\end{tikzpicture}
		\subcaption{}\label{fig:schema_c}
	\end{minipage}%
	\caption{\label{alternate_projections} The trajectory of $\gamma^{\ell}$ given by the SK algorithm is illustrated for decreasing values of $\epsilon$ in (a) and (b). Overrelaxed projections (c) typically accelerate the convergence rate.}
\end{figure}

In what follows, we first define overrelaxed Bregman projections. We then propose a Lyapunov function that is used to show the global convergence of our proposed algorithm in section \ref{sec:algo}. The local convergence rate is then discussed in section \ref{section:local}.

\subsection{Overrelaxed projections}
We recall that $P_{\Ccal_k}$ are   operators  realizing the Bregman projection of  matrices $\gamma\in\IR^{n_1 n_2}$ onto the affine sets $\Ccal_k$, $k=1,2$, as  defined in \eqref{scaling}.
For $\omega\geq 0$, we now define the $\omega$-relaxed  projection operator $P^\omega_{\Ccal_k}$  as
\begin{equation}\label{eq:def_or_proj}
	\log P^\omega_{\Ccal_k}(\gamma) = (1-\omega) \log \gamma + \omega \log P_{\Ccal_k}(\gamma),
\end{equation}
where the logarithm is taken coordinate-wise.
Note that $P_{\Ccal_k}^0$ is the identity, $P_{\Ccal_k}^1 = P_{\Ccal_k}$ is the standard Bregman projection ) and $P_{\Ccal_k}^2$ is an involution (in particular because $\Ccal_k$ is an affine subspace). In the following, we will consider overrelaxations corresponding to $\omega\in[1;2)$
A naive algorithm would then consist in iteratively applying $P^\omega_{\Ccal_2}\circ P^\omega_{\Ccal_1}$ for some choice of $\omega$.
While it often behaves well in practice, this algorithm may sometimes not converge even for reasonable values of $\omega$.
Our goal in this section is to make this algorithm robust and to guarantee its global convergence.
\begin{remark}
Duality gives another point of view on the iterative overrelaxed Bregman projections: they indeed correspond to a successive overrelaxation (SOR) algorithm on the dual objective $E$ given in \eqref{DROT}. This is a procedure which, starting from $(\da^0,\db^0)=(\mathbf{0},\mathbf{0})$, defines for $\ell \in \mathbb{N}^*$,
\begin{align}\label{SORdual}
	\da^{\ell+1} &= (1-\omega)\da^{\ell} + \omega \argmax_\da E(\da,\db^{\ell})\\
	\db^{\ell + 1}&=(1-\omega) \db^{\ell} + \omega \argmax_\db E(\da^{\ell+1},\db),\label{SORdualb}
\end{align} 
From the definition of the projections in \eqref{scaling} and using again the relationships $\uu_i=e^{\da_i/\epsilon}$, $\vv_j=e^{\db_j/\epsilon}$ and $\gamma^0_{i,j}=e^{-c_{i,j}/\epsilon}$, expressions \eqref{SORdual} and \eqref{SORdualb} can be seen as overrelaxed projections \eqref{eq:def_or_proj}.
\end{remark}

\subsection{Lyapunov function}
Convergence of the successive overrelaxed projections is not guaranteed in general. In order to derive a robust algorithm with provable convergence, we introduce the Lyapunov function 
\begin{equation}\label{eq:lyapunov_function}
	F(\gamma) = \KL(\gamma^*, \gamma),
\end{equation}
where $\gamma^*$ denotes the solution of the regularized OT problem. We will use this function to enforce the strict descent criterion $F(\gamma^{\ell+1}) < F(\gamma^\ell)$ as long as the process has not converged.

The choice of \eqref{eq:lyapunov_function} as a Lyapunov function is of course related to the fact that Bregman projections are used throughout the algorithm. Further, we will show (Lemma \ref{lemma:lyapunov_decrease}) that its decrease is simple to compute and this descent criterion still allows enough freedom in the choice of the overrelaxation parameter.

Crucial properties of this Lyapunov function are gathered in the next lemma.
\begin{lemma} \label{lemma:lyapunov_decrease}
	For any $M \in \IR_+^*$, the sublevel set $\left\{ \gamma \mid F(\gamma) \le M \right\}$ is compact.
	Moreover, for any $\gamma$ in $\IR^{mn}_{+*}$, the decrease of the Lyapunov function after an overrelaxed projection can be computed as
	\begin{equation} \label{eq:kl_diff_scal}
		F(\gamma) - F(P^\omega_{\Ccal_k}(\gamma)) = 
		\scal{\mu_k}{\varphi_\omega \left((A_k \gamma) \oslash \mu_k \right)},
	\end{equation}
	where
	\begin{equation}
		\varphi_\omega(x) = x(1-x^{-\omega}) - \omega \log x
	\end{equation}
	is a real function, applied coordinate-wise.
\end{lemma}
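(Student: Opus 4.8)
The plan is to treat the two claims separately, since they rest on different ideas. For compactness, the key observation is that $F$ separates as a sum over coordinates: writing
$$F(\gamma) = \sum_{i,j} f_{ij}(\gamma_{ij}), \qquad f_{ij}(x) = \gamma^*_{ij}\Bigl(\log\tfrac{\gamma^*_{ij}}{x} - 1\Bigr) + x,$$
each $f_{ij}$ depends on a single entry of $\gamma$. Since $\gamma^*$ is a positive diagonal scaling of $e^{-c/\epsilon}>0$, it has strictly positive entries, so each $f_{ij}$ is continuous on $\IR_{+*}$, nonnegative (its minimum, attained at $x=\gamma^*_{ij}$, equals $0$), and coercive: $f_{ij}(x)\to+\infty$ both as $x\to 0^+$ (through $-\gamma^*_{ij}\log x$) and as $x\to+\infty$ (through $x$). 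I would then note that $F(\gamma)\le M$ forces $f_{ij}(\gamma_{ij})\le M$ for every $(i,j)$, because all summands are nonnegative; coercivity thus confines each $\gamma_{ij}$ to a compact interval $[\delta_{ij},R_{ij}]\subset\IR_{+*}$. The sublevel set is therefore contained in a product of compact intervals and, being closed by continuity of $F$, is compact. The only subtlety here is the behavior at $0^+$, which genuinely requires the strict positivity of $\gamma^*$.

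For the decrease formula I would first make the overrelaxed projection explicit. Taking $k=1$ and writing $a=\mu^1\oslash(A_1\gamma)$, definition \eqref{eq:def_or_proj} together with $P_{\Ccal_1}(\gamma)=\diag(a)\gamma$ gives coordinate-wise
$$\log\bigl(P^\omega_{\Ccal_1}(\gamma)\bigr)_{ij} = \log\gamma_{ij} + \omega\log a_i, \qquad\text{i.e.}\qquad \bigl(P^\omega_{\Ccal_1}(\gamma)\bigr)_{ij} = \gamma_{ij}\,a_i^{\,\omega},$$
so the overrelaxed projection merely raises the diagonal scaling to the power $\omega$. Substituting $\tilde\gamma=P^\omega_{\Ccal_1}(\gamma)$ into $F$ and subtracting, the logarithmic part of $\KL(\gamma^*,\cdot)$ contributes $\sum_{ij}\gamma^*_{ij}\log(\tilde\gamma_{ij}/\gamma_{ij})=\omega\sum_i\log a_i\sum_j\gamma^*_{ij}$, while the linear part contributes $\sum_{ij}(\gamma_{ij}-\tilde\gamma_{ij})=\sum_i(1-a_i^{\,\omega})(A_1\gamma)_i$.

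The step I expect to carry the most weight is the simplification of the logarithmic part: here I would invoke $\gamma^*\in\Ccal_1$, so that $\sum_j\gamma^*_{ij}=(A_1\gamma^*)_i=\mu^1_i$. This is precisely what makes the decrease computable in practice, since the dependence on the unknown optimizer $\gamma^*$ collapses to its prescribed marginal. Setting $x_i=(A_1\gamma)_i/\mu^1_i$ (so that $a_i=x_i^{-1}$) and combining the two parts yields
$$F(\gamma) - F(\tilde\gamma) = \sum_i \mu^1_i\Bigl[x_i(1-x_i^{-\omega}) - \omega\log x_i\Bigr] = \scal{\mu^1}{\varphi_\omega\bigl((A_1\gamma)\oslash\mu^1\bigr)},$$
which is exactly \eqref{eq:kl_diff_scal} for $k=1$. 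The case $k=2$ is identical after exchanging the roles of rows and columns, scaling by $b=\mu^2\oslash(A_2\gamma)$ and using $A_2\gamma^*=\mu^2$. Beyond this collapse, the remaining work is routine bookkeeping of the KL difference.
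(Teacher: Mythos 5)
Your proof is correct and follows essentially the same route as the paper's (terse) proof: compactness via the decomposition of $F$ into nonnegative, coercive coordinate-wise functions, and the decrease formula via the KL difference identity $F(\gamma^1)-F(\gamma^2)=\sum_{i,j}\bigl(\gamma^*_{i,j}\log(\gamma^2_{i,j}/\gamma^1_{i,j})+\gamma^1_{i,j}-\gamma^2_{i,j}\bigr)$ combined with the scaling form of $P^\omega_{\Ccal_k}$. You merely make explicit two details the paper leaves implicit --- handling closedness by confining the sublevel set to the open orthant rather than invoking lower semicontinuity of $\KL$, and the collapse of the $\gamma^*$-dependence through $A_k\gamma^*=\mu^k$ --- both of which are faithful elaborations rather than a different argument.
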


\begin{proof}
	The fact that the Kullback-Leibler divergence is jointly lower semicontinuous implies in particular that $K$ is closed. Moreover, $K\subset \mathbb{R}^{n_1\times n_2}_+$ is bounded because $F$ is the sum of nonnegative, coercive functions of each component of its argument $\gamma$.
	
	Formula \eqref{eq:kl_diff_scal} comes from the expression $$F(\gamma^1)-F(\gamma^2)= \sum_{i,j}\left(\gamma^*_{i,j}\log(\gamma^2_{i,j}/\gamma^1_{i,j})+\gamma^1_{i,j}-\gamma^2_{i,j}\right)$$ and  relations \eqref{eq:def_or_proj} and \eqref{scaling}.
\end{proof}

be calculated without knowing $\gamma^*$, as shown by the following lemma.

It follows from Lemma \ref{lemma:lyapunov_decrease} that the decrease of $F$ for an overrelaxed projection is very cheap to estimate, since its computational cost is linear with respect to the dimension of data $\mu_k$. In Figure \ref{phi_omega}, we display the function  $\varphi_\omega(x)$. Notice that for the Sinkhorn--Knopp algorithm, which corresponds to $\omega=1$, the function $\varphi_\omega$ is always nonnegative. For other values $1\le\omega<2$, it is nonnegative for $x$ close to 1.

\begin{figure}[ht!]
	\begin{center}
		\includegraphics[width=8cm]{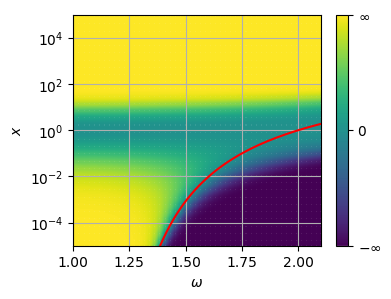}
		\caption{\label{phi_omega} Value of $\varphi_\omega(x)$. The function is positive above the red line, negative below. For any relaxation parameter $\omega$ smaller than $2$, there exists a neighborhood of $1$ on which $\varphi_\omega(\cdot)$ is positive.}
	\end{center}
\end{figure}

\subsection{Proposed algorithm}\label{sec:algo}
We first give a general convergence result that later serves as a basis to design an explicit algorithm.
\begin{theorem}\label{thm:algo}
	Let $\func_1$ and $\func_2$ be two continuous functions of $\gamma$ such that
	\begin{equation}\label{eq:cond_theta_k}
		\forall \gamma \in \IR_{+*}^{n_1 n_2},\quad
		F(P_{\Ccal_k}^{\func_k(\gamma)}(\gamma)) \le F(\gamma) ,
	\end{equation}
	where the inequality is strict whenever $\gamma \notin \Ccal_k$.
	Consider the sequence defined by $\gamma^0 = e^{-c/\epsilon}$ and
	\begin{align*}
		\tilde{\gamma}^{\ell+1} &= P_{\Ccal_1}^{\func_1(\gamma^{\ell})}(\gamma^{\ell}) \\
		\gamma^{\ell+1} &= P_{\Ccal_2}^{\func_2(\tilde{\gamma}^{\ell+1})}(\tilde{\gamma}^{\ell+1}).
	\end{align*}
	Then the sequence $(\gamma^{\ell})$ converges to $\gamma^*$.
\end{theorem}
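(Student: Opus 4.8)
The plan is to treat $F$ as a genuine Lyapunov function and run a standard monotonicity--compactness argument, the only delicate point being the identification of the limit. First I would record that the sequence $\ell \mapsto F(\gamma^\ell)$ is nonincreasing: applying the hypothesis \eqref{eq:cond_theta_k} with $k=1$ gives $F(\tilde\gamma^{\ell+1}) \le F(\gamma^\ell)$, and with $k=2$ gives $F(\gamma^{\ell+1}) \le F(\tilde\gamma^{\ell+1})$, so $F(\gamma^{\ell+1}) \le F(\gamma^\ell)$. Since $F = \KL(\gamma^*,\cdot) \ge 0$, the sequence $F(\gamma^\ell)$ converges to some $F_\infty \ge 0$, and all iterates lie in the sublevel set $\{F \le F(\gamma^0)\}$, which is compact by Lemma \ref{lemma:lyapunov_decrease}. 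A key observation I would make explicit here is that finiteness of $F$ forces every coordinate to be strictly positive: because $\gamma^0 = e^{-c/\epsilon}$ is entrywise positive and Bregman projections are positive diagonal scalings, $\gamma^*$ is entrywise positive, so any $\gamma$ with $F(\gamma) < \infty$ must have $\gamma_{ij} > 0$ wherever $\gamma^*_{ij} > 0$, i.e. everywhere. Hence the compact sublevel set sits inside the open orthant $\IR_{+*}^{n_1 n_2}$, where the iteration is well defined and continuous.

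Next I would extract a convergent subsequence $\gamma^{\ell_j} \to \gamma^\infty \in \IR_{+*}^{n_1 n_2}$. On the open orthant the scalings $\mu^k \oslash (A_k\gamma)$ are continuous (the denominators stay positive), so $\gamma \mapsto P_{\Ccal_k}^{\theta_k(\gamma)}(\gamma)$ is continuous there by continuity of $\theta_k$ and of $(\omega,\gamma)\mapsto P^\omega_{\Ccal_k}(\gamma)$; therefore the full one-step map $\Phi = \bigl(P_{\Ccal_2}^{\theta_2}\bigr)\circ\bigl(P_{\Ccal_1}^{\theta_1}\bigr)$ is continuous. Passing to the limit along the subsequence, $\gamma^{\ell_j+1} = \Phi(\gamma^{\ell_j}) \to \Phi(\gamma^\infty)$, and since $F(\gamma^{\ell_j+1}) \to F_\infty$ while $F$ is continuous on the orthant, I get $F(\Phi(\gamma^\infty)) = F_\infty = F(\gamma^\infty)$.

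The heart of the argument---and the step I expect to be the main obstacle---is turning this equality into $\gamma^\infty = \gamma^*$. Writing $\tilde\gamma^\infty = P_{\Ccal_1}^{\theta_1(\gamma^\infty)}(\gamma^\infty)$, the chain $F(\gamma^\infty) \ge F(\tilde\gamma^\infty) \ge F(\Phi(\gamma^\infty)) = F(\gamma^\infty)$ must collapse to equalities, and the strict part of \eqref{eq:cond_theta_k} (contrapositive) then forces $\gamma^\infty \in \Ccal_1$ and $\tilde\gamma^\infty \in \Ccal_2$. Since $\gamma^\infty \in \Ccal_1$ makes the first overrelaxed projection the identity (the scaling factor $\mu^1 \oslash (A_1\gamma^\infty)$ is all ones), $\tilde\gamma^\infty = \gamma^\infty$, whence $\gamma^\infty \in \Ccal_1 \cap \Ccal_2$. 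To conclude I would exploit the structure preserved by the iteration: by \eqref{eq:def_or_proj} and \eqref{scaling}, each overrelaxed projection multiplies $\gamma$ by a positive row (resp. column) factor raised to the power $\omega$, so every $\gamma^\ell$, and hence $\gamma^\infty$, is of the form $\diag(u)\,\gamma^0\,\diag(v)$ with $u,v > 0$ (a condition that is closed among entrywise-positive matrices). The unique element of $\Ccal_1 \cap \Ccal_2$ that is a positive diagonal scaling of $\gamma^0$ is exactly the Bregman projection $\gamma^* = P_{\Ccal_1\cap\Ccal_2}(\gamma^0)$ of \eqref{eq:reg_ot_pb}, since this scaling form is precisely the first-order optimality condition for minimizing $\KL(\cdot,\gamma^0)$ over the affine set $\Ccal_1\cap\Ccal_2$. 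Therefore $\gamma^\infty = \gamma^*$ and $F_\infty = F(\gamma^*) = 0$.

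Finally I would upgrade subsequential convergence to convergence of the whole sequence. Having shown $F(\gamma^\ell) \downarrow 0$, any subsequential limit $\gamma^{\star\star}$ of the precompact sequence $(\gamma^\ell)$ lies in the open orthant and satisfies $F(\gamma^{\star\star}) = 0$ by continuity (or lower semicontinuity) of $F$; since $\KL(\gamma^*,\gamma) = 0$ holds if and only if $\gamma = \gamma^*$, every subsequential limit equals $\gamma^*$. A precompact sequence all of whose subsequential limits coincide converges, so $\gamma^\ell \to \gamma^*$, as claimed.
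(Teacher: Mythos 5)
Your proposal is correct and follows essentially the same route as the paper's proof: monotone decrease of the Lyapunov function $F$, precompactness via its compact sublevel sets, continuity of the iteration maps forcing equality of $F$-values at a cluster point, the strictness hypothesis pushing that cluster point into $\Ccal_1 \cap \Ccal_2$, and uniqueness of the positive diagonal scaling of $\gamma^0$ in the intersection (the paper's Lemma~\ref{lemma:trivial_intersection}, cited from Cuturi) to identify it with $\gamma^*$. The only differences are ones of added rigor rather than of method: you make explicit the entrywise positivity of the sublevel sets (hence of cluster points), the closedness of the diagonal-scaling class among entrywise-positive limits, and a self-contained first-order-optimality justification of the uniqueness lemma, all points the paper leaves implicit or delegates to a citation.
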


\begin{lemma}[]
	\label{lemma:trivial_intersection}
	Let us take $\gamma^0$ in $\IR_{+*}^{n_1 n_2}$,
	and denote
	\[
	S = \left\{
	\diag(\uu) \gamma^0 \diag(\vv),\quad
	(\uu,\vv) \in \IR_{+*}^{n_1 + n_2}
	\right\}
	\]
	the set of matrices that are diagonally similar to $\gamma^0$.
	Then the set $S \cap \Ccal_1 \cap \Ccal_2$ contains exactly one element $\gamma^* = P_{\Ccal_1 \cap \Ccal_2}(\gamma^0)$.
\end{lemma}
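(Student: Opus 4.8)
The plan is to characterize $S \cap \Ccal_1 \cap \Ccal_2$ through the first-order optimality conditions of the convex program defining the Bregman projection $P_{\Ccal_1 \cap \Ccal_2}(\gamma^0) = \argmin_{\gamma \in \Ccal_1 \cap \Ccal_2} \KL(\gamma,\gamma^0)$, and to use strict convexity to collapse the intersection to a single point. The diagonal-scaling structure of $S$ will emerge precisely as the separable form of the Lagrange stationarity condition.

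First I would check that $\gamma^*$ is well defined. The feasible set $\Ccal_1 \cap \Ccal_2 \cap \IR^{n_1 n_2}_+$ is nonempty, since the product coupling $\mu^1 (\mu^2)^\top$ has marginals $\mu^1$ and $\mu^2$ (the $\mu^k$ being probability measures), and it is compact because the row-sum constraints force $0 \le \gamma_{ij} \le \mu^1_i$. As $\gamma \mapsto \KL(\gamma,\gamma^0)$ is strictly convex and lower semicontinuous, the minimizer $\gamma^*$ exists and is unique.

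Next I would show $\gamma^* \in S$, which establishes existence. The key point is that $\gamma^*$ lies in the open orthant $\IR^{n_1 n_2}_{+*}$: since $\gamma^0 > 0$ and a strictly positive feasible point exists, while $\partial_{\gamma_{ij}} \KL(\gamma,\gamma^0) = \log(\gamma_{ij}/\gamma^0_{ij}) \to -\infty$ as $\gamma_{ij} \to 0^+$, no boundary point can be optimal, as shifting an infinitesimal amount of mass onto a vanishing coordinate strictly decreases the objective. Hence the nonnegativity constraints are inactive, and the Lagrangian for the equality constraints $A_1 \gamma = \mu^1$, $A_2 \gamma = \mu^2$ yields the stationarity condition $\log(\gamma^*_{ij}/\gamma^0_{ij}) = \lambda_i + \nu_j$ for some multipliers $\lambda \in \IR^{n_1}$, $\nu \in \IR^{n_2}$. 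Setting $a_i = e^{\lambda_i}$ and $b_j = e^{\nu_j}$ gives $\gamma^* = \diag(a)\gamma^0 \diag(b) \in S$, so $S \cap \Ccal_1 \cap \Ccal_2 \ni \gamma^*$.

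Finally, for uniqueness I would run the argument in reverse. Any $\gamma = \diag(a)\gamma^0\diag(b) \in S \cap \Ccal_1 \cap \Ccal_2$ has strictly positive entries and satisfies $\log(\gamma_{ij}/\gamma^0_{ij}) = \log a_i + \log b_j$, which is exactly the stationarity condition above with $\lambda_i = \log a_i$ and $\nu_j = \log b_j$. Because the program is convex with linear equality constraints, this first-order condition at a feasible interior point is sufficient for global optimality; by uniqueness of the minimizer, $\gamma = \gamma^*$. I expect the main obstacle to be the interiority step: one must rule out optimal points on the boundary of the positive orthant carefully, since only then are the equality-constrained stationarity conditions available in the clean separable form $\log(\gamma_{ij}/\gamma^0_{ij}) = \lambda_i + \nu_j$ that forces membership in $S$. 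The remaining implications are routine consequences of strict convexity.
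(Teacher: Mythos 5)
Your proof is correct, and it is worth noting that the paper itself does not prove this lemma: it simply defers to \cite{cuturi13}. Your argument is essentially the standard self-contained derivation underlying that citation --- existence and uniqueness of $\gamma^*$ from compactness of $\Ccal_1\cap\Ccal_2\cap\IR_+^{n_1n_2}$ and strict convexity of $\KL(\cdot,\gamma^0)$; interiority of $\gamma^*$ from the infinite downward slope of $t\mapsto t\log t$ at $0^+$ (to be fully rigorous here, one moves along the segment from $\gamma^*$ to the strictly positive product coupling, which stays feasible, and notes that the one-sided derivative of the objective along it is $-\infty$ if any coordinate of $\gamma^*$ vanishes --- your segment construction supplies exactly this, and it implicitly uses the paper's standing assumption $\mu^k\in\IR_{+*}^{n_k}$ to make the product coupling strictly positive); Lagrange stationarity $\log(\gamma^*_{ij}/\gamma^0_{ij})=\lambda_i+\nu_j$, valid without any constraint qualification since the remaining constraints are affine; and, in reverse, sufficiency of this stationarity at an interior feasible point, since for any feasible $\gamma'$ the subgradient inequality gives $\KL(\gamma',\gamma^0)\ge\KL(\gamma,\gamma^0)+\scal{A_1^{T}\lambda+A_2^{T}\nu}{\gamma'-\gamma}$ and the inner product vanishes on $\ker A_1\cap\ker A_2$. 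Compared with the citation route, your version makes explicit the one genuinely delicate step, interiority, without which the separable stationarity condition (and hence membership in $S$) is unavailable; the reference instead leans on the same Lagrangian computation combined with Sinkhorn's classical matrix-scaling theorem for uniqueness. Either way the conclusion $S\cap\Ccal_1\cap\Ccal_2=\{\gamma^*\}$ with $\gamma^*=P_{\Ccal_1\cap\Ccal_2}(\gamma^0)$ is fully justified, so your proposal stands as a complete proof where the paper offers only a pointer.
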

\begin{proof}
	We refer to \cite{cuturi13} for a proof of this lemma.
\end{proof}

\begin{proof}[Proof of the theorem]
	First of all, notice that the operators $P_{\Ccal_k}^\theta$ apply a scaling to lines or columns of matrices. All $(\gamma^{\ell})$ are thus diagonally similar to $\gamma^0$:
	\[
	\forall \ell\ge0,\quad \gamma^{\ell} \in S
	\]
	By construction of the functions $\func_k$, the sequence of values of the Lyapunov function $(F(\gamma^{\ell}))$ is non-increasing. Hence $(\gamma^{\ell})$ is precompact.
	If $\zeta$ is a cluster point of $(\gamma^{\ell})$, let us define
	\begin{align*}
		\tilde{\zeta} &= P_{\Ccal_1}^{\func_1(\zeta)}(\zeta) \\
		\zeta' &= P_{\Ccal_2}^{\func_2(\tilde{\zeta})}(\tilde{\zeta}).
	\end{align*}
	Then by continuity of the applications, $F(\zeta) = F(\tilde{\zeta}) = F(\zeta')$. 
	From the hypothesis made on $\func_1$ and $\func_2$, it can be deduced that $\zeta$ is in $\Ccal_1$, and is thus a fixed point of $P_{\Ccal_1}$, while $\tilde{\zeta}$ is in $\Ccal_2$. Therefore $\zeta' = \tilde{\zeta} = \zeta$ is in the intersection $\Ccal_1 \cap \Ccal_2$.
	By Lemma \ref{lemma:trivial_intersection}, $\zeta = \gamma^*$, and the whole sequence $(\gamma^{\ell})$ converges to the solution.
\end{proof}

We can construct explicitly functions $\func_k$ as needed in Theorem~\ref{thm:algo} using the following lemma.
\begin{lemma}\label{lemma:F_P_theta}
	Let $1\le \omega < {{\theta}}$. Then, for any $\gamma \in \IR_{+*}^{n_1 n_2}$, one has
	\begin{equation}\label{eq:F_P_theta}
		F(P^\omega_{\Ccal_k}(\gamma)) \le F(P^{{\theta}} _{\Ccal_k}(\gamma)).
	\end{equation}
	Moreover, equality occurs if and only if $\gamma \in \Ccal_k$.
\end{lemma}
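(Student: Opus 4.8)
The plan is to reduce the stated inequality to an elementary one-dimensional comparison between $\varphi_\theta$ and $\varphi_\omega$, exploiting the closed-form decrease formula \eqref{eq:kl_diff_scal} of Lemma~\ref{lemma:lyapunov_decrease}. Set $x = (A_k \gamma)\oslash\mu^k \in \IR_{+*}^{n_k}$. Applying \eqref{eq:kl_diff_scal} with the two exponents $\theta$ and $\omega$ and subtracting, the common term $F(\gamma)$ cancels and I obtain
\[
F(P^\omega_{\Ccal_k}(\gamma)) - F(P^\theta_{\Ccal_k}(\gamma)) = \scal{\mu^k}{\varphi_\theta(x) - \varphi_\omega(x)}.
\]
Because $\mu^k$ has strictly positive entries, it is enough to prove the pointwise bound $\varphi_\theta(t) \ge \varphi_\omega(t)$ for every $t \in \IR_{+*}$, together with the sharper information that the inequality is strict as soon as $t \neq 1$.

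Now the crux: for fixed $t>0$ I would study the monotonicity of $s \mapsto \varphi_s(t) = t(1 - t^{-s}) - s\log t$ on $s \ge 1$. A direct differentiation gives $\partial_s \varphi_s(t) = (t^{1-s}-1)\log t$. The key observation is that $t^{1-s}-1 = e^{(1-s)\log t}-1$ always carries the same sign as its exponent $(1-s)\log t$, so the product $(t^{1-s}-1)\log t$ has the same sign as $(1-s)(\log t)^2$. Hence $\partial_s\varphi_s(t) \le 0$ for all $s \ge 1$, it vanishes identically in $s$ exactly when $t=1$, and for $t \neq 1$ it is strictly negative on $(1,\infty)$. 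Thus $s \mapsto \varphi_s(t)$ is non-increasing on $[1,\infty)$ and strictly decreasing there unless $t=1$, which by integrating $-\partial_s\varphi_s(t)$ over $(\theta,\omega)\subseteq[1,\infty)$ yields $\varphi_\theta(t) \ge \varphi_\omega(t)$ for $1 \le \theta < \omega$, with equality precisely when $t=1$.

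Assembling the two parts, each summand $\mu^k_i\bigl(\varphi_\theta(x_i) - \varphi_\omega(x_i)\bigr)$ is nonnegative, so $F(P^\omega_{\Ccal_k}(\gamma)) - F(P^\theta_{\Ccal_k}(\gamma)) \ge 0$, which is exactly \eqref{eq:F_P_theta}. For the equality case, the sum vanishes iff every summand vanishes; since $\mu^k_i > 0$ and $\varphi_\theta(x_i) - \varphi_\omega(x_i) > 0$ whenever $x_i \neq 1$, equality forces $x_i = 1$ for all $i$, i.e.\ $A_k\gamma = \mu^k$, which is precisely the membership $\gamma \in \Ccal_k$. The only delicate point in the whole argument is the sign analysis of $\partial_s\varphi_s(t)$; routing it through the identity that $e^u-1$ has the sign of $u$ sidesteps an otherwise tedious case split according to whether $t>1$ or $t<1$, and it simultaneously delivers the sharp equality condition needed for the ``if and only if'' part.
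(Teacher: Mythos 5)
Your proof is correct and follows essentially the same route as the paper's: both reduce the claim via the decrease formula \eqref{eq:kl_diff_scal} of Lemma~\ref{lemma:lyapunov_decrease} to the monotonicity of $s \mapsto \varphi_s(t)$, established through the derivative $\partial_s \varphi_s(t) = (t^{1-s}-1)\log t$ and its sign. Your explicit sign argument (that $e^u - 1$ carries the sign of $u$) merely fills in a step the paper leaves implicit, so there is no substantive difference.
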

\begin{proof}
	Thanks to Lemma \ref{lemma:lyapunov_decrease}, one knows that
	\[
	F(P^\omega_{\Ccal_k}(\gamma)) - F(P^{{\theta}} _{\Ccal_k}(\gamma))
	= \scal{\mu_k}{(\varphi_{{\theta}} - \varphi_\omega) \left( \frac{A_k \gamma}{\mu_k} \right) } .
	\]
	The function that maps $t \in [1,\infty)$ to $\varphi_t(x)$ is non-increasing since
	$\partial_t \varphi_t(x) =  (x^{1-t} - 1)\log x.$
	For $x\neq 1$, it is even strictly decreasing.
	Thus inequality \eqref{eq:F_P_theta} is valid, with equality \emph{iff} $A_k \gamma = \mu_k$.
\end{proof}

We now argue that a good choice for the functions $\func_k$ may be constructed as follows. Pick a target parameter ${\theta_0} \in [1;2)$, that will act as an upper bound for the overrelaxation parameter $\omega$, and a small security distance $\delta>0$. Define the functions $\Theta^*$ and $\Theta$ as
\begin{align}
	\label{eq:Theta_opt}
	\Theta^*(w) &= \sup \left\{\omega \in [1;2]  \mid \varphi_\omega\left(\min w\right) \ge 0 \right\} ,\\
	\label{eq:Theta}
	\Theta(w) &= \min(\max(1,\Theta^*(w)-\delta),{\theta_0}),
\end{align}
where $\min w$ denotes the smallest coordinate of the vector $w$. 

\begin{proposition}\label{prop:thetachoice}
	The function
	\begin{equation}
		\label{eq:theta_k}
		\func_k(\gamma) =\Theta\left ((A_k \gamma)\oslash \mu_k\right)
	\end{equation}
	is continuous and verifies the descent condition (\ref{eq:cond_theta_k}).
\end{proposition}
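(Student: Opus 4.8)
I would prove the two assertions separately, reducing both to the one-variable analysis of $\varphi_\omega$. Since $\Theta^*(u)$ depends on $u$ only through $m:=\min u$, and since $\gamma\mapsto (A_k\gamma)\oslash\mu^k$, $u\mapsto\min u$, and the clamp $s\mapsto\min(\max(1,s-\delta),\theta_0)$ are all continuous, continuity of $\theta_k$ reduces to continuity of $g(m):=\sup\{\omega\in[1,2]\mid\varphi_\omega(m)\ge0\}$ for $m>0$. I would first record that the set $\{\omega\in[1,2]\mid\varphi_\omega(m)\ge0\}$ is a nonempty closed subinterval $[1,g(m)]$: it contains $1$ because $\varphi_1(m)=m-1-\log m\ge0$, it is closed by continuity of $\omega\mapsto\varphi_\omega(m)$, and it is an interval because $\partial_\omega\varphi_\omega(m)=(m^{1-\omega}-1)\log m\le0$ (the monotonicity already used in Lemma~\ref{lemma:F_P_theta}). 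In particular the supremum is attained, so $\varphi_{g(m)}(m)\ge0$.

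\textbf{Continuity.} For upper semicontinuity, if $m_n\to m$ and $g(m_{n_j})\to L$ along a subsequence, then $\varphi_{g(m_{n_j})}(m_{n_j})\ge0$ passes to the limit by joint continuity, giving $\varphi_L(m)\ge0$ and hence $L\le g(m)$. For lower semicontinuity at $m\neq1$, the map $\omega\mapsto\varphi_\omega(m)$ is \emph{strictly} decreasing, so it cannot vanish on an interval; then if $g(m_{n_j})\to L<g(m)$ one could pick $\omega'\in(L,g(m))$ with $\varphi_{\omega'}(m_{n_j})<0$ (as $\omega'>g(m_{n_j})$) yet $\varphi_{\omega'}(m)\ge0$, forcing $\varphi_{\omega'}(m)=0$ for every such $\omega'$, a contradiction. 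The only delicate point is $m=1$, where $g(1)=2$ since $\varphi_\omega(1)=0$ for all $\omega$; there I would use that for each fixed $\omega<2$ the point $x=1$ is a strict minimum of $\varphi_\omega$ with $\varphi_\omega''(1)=\omega(2-\omega)>0$, so $\varphi_\omega(m_n)\ge0$ for $m_n$ near $1$, giving $\liminf g(m_n)\ge\omega$ for every $\omega<2$ and thus $g(m_n)\to2$.

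\textbf{Descent, modulo the key structural fact.} Writing $\omega=\theta_k(\gamma)=\Theta(u)$ and $u=(A_k\gamma)\oslash\mu^k$, Lemma~\ref{lemma:lyapunov_decrease} gives $F(\gamma)-F(P_{\Ccal_k}^{\omega}(\gamma))=\sum_i\mu^k_i\,\varphi_\omega(u_i)$, so it suffices to show $\varphi_\omega(u_i)\ge0$ for every $i$. By construction $1\le\omega\le\Theta^*(u)=g(m)$, and monotonicity in $\omega$ with $\varphi_{g(m)}(m)\ge0$ yields $\varphi_\omega(m)\ge0$ for $m=\min u$. The crux is to propagate this from the minimal coordinate to all coordinates, i.e.\ the \emph{up-set property}: for $\omega\in[1,2]$, the set $\{x>0\mid\varphi_\omega(x)\ge0\}$ equals $[x_0(\omega),\infty)$ for some $x_0(\omega)\le1$. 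Granting this, $u_i\ge m$ forces $\varphi_\omega(u_i)\ge0$ for all $i$, which proves the inequality.

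\textbf{The up-set property, strictness, and the main obstacle.} I would prove the up-set property by splitting at $x=1$. On $[1,\infty)$ one has $\varphi_\omega''(x)=\omega x^{-2}\bigl(1-(\omega-1)x^{1-\omega}\bigr)\ge\omega x^{-2}(2-\omega)\ge0$, so $\varphi_\omega$ is convex there; with $\varphi_\omega(1)=\varphi_\omega'(1)=0$ this forces $\varphi_\omega\ge0$, indeed $>0$ for $x>1$. On $(0,1)$ I would factor $\varphi_\omega'(x)=x^{-\omega}\psi(x)$ with $\psi(x)=x^\omega-\omega x^{\omega-1}+(\omega-1)$, noting $\psi$ has a single interior minimum at $x=\omega-1$, with $\psi(0^+)=\omega-1>0$ and $\psi(1)=0$, hence exactly one zero in $(0,1)$; this makes that zero a maximum of $\varphi_\omega$, and combined with $\varphi_\omega(0^+)=-\infty$ and $\varphi_\omega(1)=0$ it produces a single sign change, giving the claim (the case $\omega=1$ is immediate as $\varphi_1\ge0$). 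For strictness when $\gamma\notin\Ccal_k$, i.e.\ $u\neq\mathbf1$: if $\min u=1$ some coordinate exceeds $1$ and contributes $\varphi_\omega(u_i)>0$; otherwise $m\neq1$, and then either $\omega=1$ with $\varphi_1(m)>0$, or $1<\omega<g(m)$ (the margin $\delta>0$ ensures $\omega<\Theta^*$ whenever $\Theta^*>1$), so strict monotonicity gives $\varphi_\omega(m)>\varphi_{g(m)}(m)\ge0$; in each case the minimal coordinate contributes a strictly positive term. I expect the genuine difficulty to lie in the up-set property---specifically the single-sign-change analysis of $\varphi_\omega$ on $(0,1)$---and, secondarily, in the continuity argument at the degenerate point $m=1$, where $\varphi_\omega''(1)$ degenerates as $\omega\to2$.
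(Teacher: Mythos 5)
Your proof is correct, and it takes a genuinely more self-contained route than the paper's, whose argument occupies only a few lines: there, continuity of $\Theta^*$ is deduced from the implicit function theorem applied to $\varphi_\omega(x)=0$ (using that $\partial_\omega\varphi_\omega(x)\neq 0$ for $x<1$), the nonnegativity of every term in \eqref{eq:kl_diff_scal} at the parameter $\Theta^*$ is essentially read off Figure~\ref{phi_omega}, and strict decrease for $\Theta^*-\delta$ then follows from the monotone comparison of Lemma~\ref{lemma:F_P_theta}. You differ at precisely the two points the paper leaves implicit. First, your ``up-set property''---that $\{x>0 \mid \varphi_\omega(x)\ge 0\}$ is an interval $[x_0(\omega),\infty)$ with $x_0(\omega)\le 1$---is exactly the content of the figure, and your sign analysis of $\varphi_\omega' = x^{-\omega}\psi$ on $(0,1)$ supplies the lemma the paper never proves (your ``interior minimum at $x=\omega-1$'' degenerates at $\omega=2$, but this is harmless since $\Theta$ is clamped into $[1,\theta_0]\subset[1,2)$). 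Second, your semicontinuity argument for the continuity of $g(m)=\Theta^*$ is more elementary than the IFT route and also covers what the IFT does not: the implicit function theorem applies where $m<1$ and the supremum is characterized by $\varphi_{g(m)}(m)=0$, but says nothing on the region $m\ge 1$ where $\Theta^*\equiv 2$, nor at the transition $m\to 1^-$; your second-order argument via $\varphi_\omega''(1)=\omega(2-\omega)>0$ handles exactly that degenerate point (the same expansion the paper only uses later, in the corollary on the local rate). Finally, for strictness you argue directly coordinatewise rather than via Lemma~\ref{lemma:F_P_theta}; both work, but your explicit treatment of the corner case $\omega=1$ (when $\Theta^*\le 1+\delta$), where the paper's ``parameter minus $\delta$'' comparison is vacuous and one silently needs $\varphi_1(m)>0$ for $m\neq 1$, is a point the paper glosses over. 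In short: same skeleton---reduce everything to one-variable properties of $\varphi_\omega$---but your version proves the two facts the paper outsources to the figure and to an IFT statement that does not cover all cases.
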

\begin{proof}
	Looking at Figure~\ref{phi_omega} can help understand this proof.	Since the partial derivative of $\partial_\omega \varphi_\omega(x)$ is nonzero for any $x<1$, the implicit function theorem proves the continuity of $\Theta^*$.
	The function $\Theta^*\left((A_k \gamma)\oslash \mu_k)\right)$ is such that every term in relation (\ref{eq:kl_diff_scal}) is non-negative.
	Therefore, by Lemma \ref{lemma:F_P_theta}, using this parameter minus $\delta$ ensures the strong decrease (\ref{eq:cond_theta_k}) of the Lyapunov function.
	Constraining the parameter to $[1,{\theta_0}]$ preserves this property. 
\end{proof}

This construction, which is often an excellent choice in practice, has several advantages:
\begin{enumerate}[align=right,labelsep=10pt,leftmargin=0.5cm]
	\item[$\bullet$] it allows to choose arbitrarily the parameter ${\theta_0}$ that will be used eventually when the algorithm is close to convergence (we motivate what are good choices for ${\theta_0}$ in Section \ref{section:local});
	\item[$\bullet$] it is also an easy approach to having an adaptive method, as the approximation of $\Theta^*$ has a negligible cost (it only requires to solve a one dimensional problem that depends on the smallest value of $(A_k \gamma)\oslash \mu_k$, which can be done in a few iterations of Newton's method).
\end{enumerate}

The resulting algorithm, which is proved to be convergent by Theorem~\ref{thm:algo}, is written in pseudo-code in Algorithm \ref{SOR}. The implementation in the log domain is also given in Algorithm \ref{SOR_log}. Both processes use the function $\Theta$ defined implicitly in \eqref{eq:Theta}. The evaluation of  $\Theta$  is approximated in practice with a few iterations of Newton's method on the function $\omega \mapsto \varphi_\omega(\min u)$ which is decreasing as it can be seen on Figure~\ref{phi_omega}. With the choice ${\theta_0}=1$, one recovers exactly the original SK algorithm.
\begin{algorithm}
	\caption{Overrelaxed SK algorithm }
	\label{SOR}
	\begin{algorithmic}
		\REQUIRE $\mu_1\in \IR^{n_1}$, $\mu_2\in \IR^{n_2}$, $c\in \IR^{n_1\times n_2}_+$
		\STATE Set $\uu=\mathbf{1}_{n_1}$, $\vv=\mathbf{1}_{n_2}$, $\gamma^0=e^{-c/\epsilon}$, ${\theta_0}\in[1;2)$ and $\eta>0$
		\WHILE {$||\uu\otimes \gamma^0 \vv -  \mu_1||>\eta$}
		\STATE $\tilde \uu=\mu_1\oslash (\gamma^0 \vv)$, 
		\STATE $\omega=\Theta(\uu\oslash\tilde \uu)$
		\STATE  $\uu=\uu^{1-\omega}\otimes \tilde \uu^\omega$
		\STATE $\tilde \vv=\mu_2\oslash (^t\gamma^0  \uu)$
		\STATE $\omega=\Theta(\vv\oslash\tilde \vv)$
		\STATE  $\vv=\vv^{1-\omega}\otimes \tilde \vv^\omega$
		\ENDWHILE
		\RETURN $\gamma=\diag(\uu)\gamma^0\diag(\vv)$
	\end{algorithmic}
\end{algorithm}

\begin{algorithm}
	\caption{Overrelaxed SK algorithm in the log domain}
	\label{SOR_log}
	\begin{algorithmic}
		\REQUIRE $\mu_1\in \IR^{n_1}$, $\mu_2\in \IR^{n_2}$, $c\in \IR^{n_1\times n_2}_+$
		\STATE Set $\da=\mathbf{0}_{n_1}$, $\db=\mathbf{0}_{n_2}$, $\gamma^0=e^{-c/\epsilon}$, ${\theta_0}\in[1;2)$ and $\eta>0$
		\WHILE {$||\exp{(\da/\epsilon)}\otimes (\gamma^0 \exp{(\db/\epsilon)}) -  \mu_1||>\eta$}
		
		\STATE $r_i=\sum_j \exp{((-c_{i,j}+\da_i+\db_j)/\epsilon}-\log(\mu_1)_i$),\hfill $i=1\cdots n_1$
		\STATE $\tilde \da=\da-\epsilon \log r$
		\STATE $\omega=\Theta(r)$
		\STATE  $\da=(1-\omega)\da+\omega \tilde \da$

		\STATE $s_j=\sum_i \exp{((-c_{i,j}+\da_i+\db_j)/\epsilon}-\log(\mu_2)_j)$,\hfill $j=1\cdots n_2$
		\STATE $\tilde \db =\db-\epsilon \log s$
		\STATE $\omega=\Theta(s)$
		\STATE  $\db=(1-\omega)\db+\omega \tilde \db$
		\ENDWHILE
		\RETURN $\gamma=\diag(\exp(\da/\epsilon))\gamma^0\diag(\exp(\db/\epsilon))$
	\end{algorithmic}
\end{algorithm}

\subsection{Acceleration of local convergence rate}
\label{section:local}
In order to justify the acceleration of convergence that is observed in practice, we now study the local convergence rate of the overrelaxed algorithm, which follows from the classical convergence analysis of the linear SOR method. Our result involves the second largest eigenvalue of the matrix
\begin{equation}\label{eq:M1}
	M_1= \diag(1\oslash \mu_1) \, \gamma^* \, \diag(1\oslash \mu_2)\, ^t\gamma^*
\end{equation}
where $\gamma^*$ is the solution to the regularized OT problem (the largest eigenvalue is $1$, associated to the eigenvector $\mathbf{1}$). We denote the second largest eigenvalue by $1-\eta$, it satisfies $\eta>0$  \cite{knight2008sinkhorn}.

\begin{proposition}\label{prop:local}
	The SK algorithm converges locally at a linear rate $1-\eta$. For the optimal choice of extrapolation parameter $\theta^* = 2/(1+\sqrt{\eta})$, the overrelaxed projection algorithm converges locally linearly at a rate $(1-\sqrt{\eta})/(1+\sqrt{\eta})$. The local convergence of the overrelaxed algorithm is guaranteed for $\theta \in {]0,2[}$ and the linear rate is given on Figure~\ref{fig:eigtransform} as a function of $1-\eta$ and $\theta$.
\end{proposition}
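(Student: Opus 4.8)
The plan is to linearise the dual iteration near the optimum and to recognise the resulting map as the iteration matrix of the classical linear SOR method, for which the optimal parameter and the asymptotic rate are known in closed form. Writing the scalings as $\gamma_{ij}=e^{(\alpha_i+\beta_j-c_{ij})/\epsilon}$ and enforcing each marginal constraint expresses the exact alternating maximisations of $E$ as maps $\alpha\mapsto\alpha(\beta)$ and $\beta\mapsto\beta(\alpha)$. Differentiating these at $\gamma^*$ gives the first-order relations $\delta\alpha=P\,\delta\beta$ and $\delta\beta=Q\,\delta\alpha$ with
\[
P=-\diag(1\oslash\mu^1)\,\gamma^*,\qquad Q=-\diag(1\oslash\mu^2)\,{}^t\gamma^*,
\]
and a direct computation shows $PQ=M_1$. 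Hence one full Sinkhorn step ($\theta=1$) linearises to $M_1$. Since $M_1$ is similar to the symmetric positive semidefinite matrix $S\,{}^tS$ with $S=\diag(1\oslash\sqrt{\mu^1})\,\gamma^*\,\diag(1\oslash\sqrt{\mu^2})$, its spectrum is real and nonnegative, equal to $1$ in the gauge direction $\mathbf 1$ and at most $1-\eta$ elsewhere; this gives the local SK rate $1-\eta$.

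Linearising the overrelaxed updates \eqref{SORdual} with relaxation parameter $\theta$ yields
\[
\begin{aligned}
\delta\alpha^{\ell+1}&=(1-\theta)\,\delta\alpha^{\ell}+\theta\,P\,\delta\beta^{\ell},\\
\delta\beta^{\ell+1}&=(1-\theta)\,\delta\beta^{\ell}+\theta\,Q\,\delta\alpha^{\ell+1},
\end{aligned}
\]
which is exactly the block SOR iteration for the two-block system whose associated Jacobi matrix is
\[
B=\begin{pmatrix}0&P\\Q&0\end{pmatrix}.
\]
Because $B$ is block-antidiagonal, the system is consistently ordered (2-cyclic): the eigenvalues of $B$ are the numbers $\pm\sqrt{\lambda}$ as $\lambda$ runs over the spectrum of $PQ=M_1$. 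Removing the gauge mode, the relevant Jacobi spectral radius is $\mu_J=\sqrt{1-\eta}<1$.

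I would then invoke Young's relation $(\lambda+\theta-1)^2=\theta^2\mu^2\lambda$ linking each eigenvalue $\lambda$ of the SOR matrix to an eigenvalue $\mu$ of $B$. For fixed real $\mu$ this quadratic in $\lambda$ has a largest-modulus root whose modulus, maximised over $|\mu|\le\mu_J$, gives the spectral radius of the linearised overrelaxed map as the function of $(1-\eta,\theta)$ plotted in the figure of the statement. The textbook minimisation over $\theta$ locates the optimum where the two roots coalesce, namely at the value of $\theta$ making the discriminant vanish, $\theta^*=2/(1+\sqrt{1-\mu_J^2})=2/(1+\sqrt\eta)$, with coincident modulus $\theta^*-1=(1-\sqrt\eta)/(1+\sqrt\eta)$. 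Convergence for every $\theta\in{]0,2[}$ follows from the Ostrowski--Reich theorem, since the reduced problem (the Hessian of $-E$ restricted to the complement of the gauge direction) is symmetric positive definite.

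The main obstacle is not the SOR computation itself but the careful treatment of the gauge direction and the verification that Young's closed form genuinely applies. The direction $\mathbf 1$ is a true null mode: the shift $\alpha\mapsto\alpha+t\mathbf 1$, $\beta\mapsto\beta-t\mathbf 1$ leaves every $\gamma^{\ell}$ unchanged, the iteration preserves the subspace spanned by $(\mathbf 1,-\mathbf 1)$ with eigenvalue $1$, and one must restrict the spectral analysis to its invariant complement --- where $\mu_J=\sqrt{1-\eta}<1$ --- to read off the true contraction rate. One must also confirm that the two-block partition really produces a 2-cyclic matrix, so that the exact relation between the SOR and Jacobi spectra (rather than only the generic Ostrowski--Reich bound) is available; this is immediate from the block-antidiagonal form of $B$ but should be stated explicitly, together with the reality of the eigenvalues $\mu$ of $B$, which is what makes the optimisation one-dimensional and the closed-form optimum valid.
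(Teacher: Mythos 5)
Your proposal is correct and follows essentially the same route as the paper: both linearize the dual SOR iteration \eqref{SORdual} at the maximizer, identify the $\theta=1$ case with the matrix $M_1$ of \eqref{eq:M1}, handle the gauge invariance along $(\mathbf{1},-\mathbf{1})$ by passing to a complement/projection, and read off the rate $f(\theta,\eta)$ and the optimum $\theta^*=2/(1+\sqrt{\eta})$ from classical SOR spectral theory. The only difference is one of packaging: where the paper cites Ciarlet, Young, and Hadjidimos (for the singular, non-strictly-concave case), you re-derive those ingredients explicitly (the Jacobi blocks $P,Q$, the 2-cyclic structure, the symmetrization showing the spectrum of $M_1$ is real and nonnegative, and Young's relation $(\lambda+\theta-1)^2=\theta^2\mu^2\lambda$), which is a sound, self-contained substitute.
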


\begin{proof}
	In this proof, we focus on the dual problem and we recall the relationship $\gamma^{\ell}=e^{\da^\ell/\epsilon}\gamma^0e^{\db^\ell/\epsilon}$ between the iterates of the overrelaxed projection algorithm $\gamma^\ell$ and the  iterates $(\da^\ell,\db^\ell)$ of the SOR algorithm on the dual problem~\eqref{SORdual}, initialized with $(\da^0,\db^0)=(0,0)$. 
	The dual problem~\eqref{DROT} is invariant by translations of the form $(\da,\db)\mapsto (\da-k,\db+k)$, $k\in \IR$, but is strictly convex up to this invariance. In order to deal with this invariance, consider the subspace $S$ of pairs of dual variables $(\da,\db)$ that satisfy $\sum \da=\sum \db$, let $\pi_S$ be the orthogonal projection on $S$ of kernel $(\mathbf{1},-\mathbf{1})$ and let $(\da^*,\db^*)\in S$ be the unique dual maximizer in $S$. 

	Since one SOR iteration is a smooth map, the local convergence properties of the SOR algorithm are characterized by the local convergence of its linearization, which here corresponds to the SOR method applied to the maximization of the quadratic Taylor expansion of the dual objective $E$ at $(\da^*,\db^*)$. This defines an affine map $M_\theta : (\da^{\ell},\db^{\ell})\mapsto (\da^{\ell+1},\db^{\ell+1})$ whose spectral properties are well known~\cite{ciarlet1982introduction, young2014iterative} (see also \cite[chapter 4]{chizat2017these} for the specific case of convex minimization and \cite{hadjidimos1985optimization} for the non-strictly convex case). For the case $\theta=1$, this is the matrix $M_1$ defined in Eq.~\eqref{eq:M1}. The operator norm of $\pi_S \circ M_1$ is smaller than $1-\eta$ so the operator $ (\pi_S \circ M_1)^\ell = \pi_S\circ M_1^\ell$ converges at the linear rate $1-\eta$ towards $0$ (observe that by construction, $\pi_S$ and $M_1$ are co-diagonalisable and thus commute): for any $(\alpha,\beta)\in \mathbb{R}^{n_1}\times \mathbb{R}^{n_2}$, it holds $\Vert \pi_S\circ M_1^\ell (\alpha -\alpha^*,\beta-\beta^*) \Vert_2 \leq \Vert \pi_S (\alpha -\alpha^*,\beta-\beta^*)\Vert_2 (1-\eta)^\ell$. More generally, the convergence of $\pi_S\circ M_\theta^\ell$ is guaranteed for $\theta \in ]0,2[$, with the linear rate 
	\begin{align}\label{eq:rate_SOR}
	f(\theta,\eta) = \begin{cases}
		\theta-1 & \text{if $\theta>\theta^*$}\\
		\frac12 \theta^2(1-\eta) - (\theta-1) +\frac12 \sqrt{(1-\eta)\theta^2(\theta^2(1-\eta)-4(\theta-1))} & \text{otherwise}.
	\end{cases}
	\end{align}
	This function is minimized with $\theta^*\coloneqq2/(1+\sqrt{\eta})$, which satisfies $f(\theta^*,\eta)=(1-\sqrt{\eta})/(1+\sqrt{\eta})$. The function $f$ is plotted in Figure \ref{fig:eigtransform}.
	
	To switch from these dual convergence results to primal convergence results, remark that $\gamma^\ell \to \gamma^*$ implies $\KL(\gamma^\ell,\gamma^0)\to \KL(\gamma^*,\gamma^0)$ which in turn implies $E(\da^\ell,\db^\ell) \to \max E$ so by invoking the partial strict concavity of $E$, we have $\pi_S(\da^\ell,\db^\ell)\to (\da^*,\db^*)$. The converse implication is direct so it holds $[\pi_S(\da^\ell,\db^\ell)\to (\da^*,\db^*)] \Leftrightarrow [\gamma^\ell \to \gamma^*]$. We conclude by noting the fact that $\pi_S(\da^\ell, \db^\ell)$ converges at a linear rate implies the same rate on $\gamma^\ell$, thanks to the relationship between the iterates.
\end{proof}

\begin{figure}
	\centering
	\includegraphics[scale=0.8]{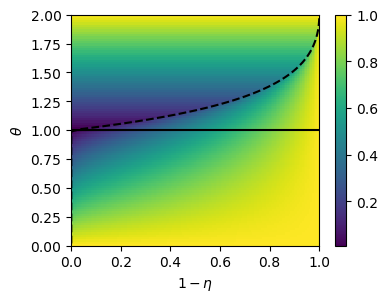}
	\caption{\label{fig:eigtransform} Local linear rate of convergence of the overrelaxed algorithm as a function of $1-\eta$, the local convergence rate of SK algorithm and $\theta$ the overrelaxation parameter. (plain curve) the original rate is recovered for $\theta=1$. (dashed curve) optimal overrelaxation parameter $\theta^*$.}
\end{figure}

\begin{corollary}
	The previous local convergence analysis applies to Algorithm \ref{SOR_log} with $\Theta$ defined as in Eq.~\eqref{eq:Theta} and the local convergence rate is given by the function of Eq.~\eqref{eq:rate_SOR} evaluated at the target extrapolation parameter ${\theta_0}$.
\end{corollary}
\begin{proof}
	What we need to show is that eventually one always has $\Theta(\gamma^\ell)={\theta_0}$. This can be seen from the quadratic Taylor expansion $\varphi_{{\theta_0}}(1+z)=z^2({\theta_0} -{\theta_0}^2/2)+ o(z^2)$, which shows that for any choice of ${\theta_0}\in {]1,2[}$, there is a neighborhood of $1$ on which $\varphi_{{\theta_0}}(\cdot)$ is nonnegative.
\end{proof}

\section{Experimental results}
We compare Algorithm~\ref{SOR} to SK algorithm on two very different optimal transport settings.
In setting (a) we consider the domain $[0,1]$ discretized into $100$ samples and the squared Euclidean transport cost on this domain.
The marginals are densities made of the sum of a base plateau of height $0.1$ and another plateau of height and boundaries chosen uniformly in $[0,1]$, subsequently normalized.
In setting (b) the cost is a $100\times 100$ random matrix with entries uniform in $[0,1]$ and the marginals are uniform.

Given an estimation of $1-\eta$, the local convergence rate of SK algorithm, we define ${\theta_0}$ as the optimal parameter as given in Proposition \ref{prop:local}. For estimating $\eta$, we follow two strategies. For strategy ``estimated'' (in blue on Figure~\ref{fig:comparison}), $\eta$ is measured by looking at the local convergence rate of SK run on another random problem of the same setting and for the same value of $\epsilon$. For strategy ``measured'' (in orange on Figure~\ref{fig:comparison}) the parameter is set using the local convergence rate of SK run on the same problem. Of course, the latter is an unrealistic strategy but it is interesting to see in our experiments that the ``estimated'' strategy performs almost as well as the ``measured'' one, as shown on~\ref{fig:comparison}.
\begin{figure}
	\begin{minipage}[b]{.5\textwidth}
		\centering
		\includegraphics[scale=0.575]{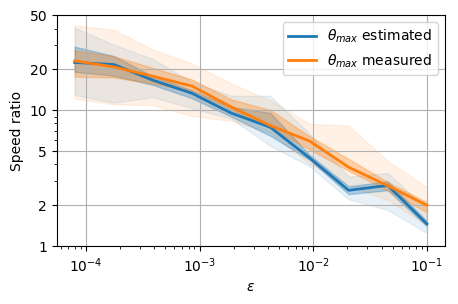}
		\subcaption{Quadratic cost, random marginals}\label{fig:compare_a}
	\end{minipage}%
	\begin{minipage}[b]{.5\textwidth}
		\centering
		\includegraphics[scale=0.575]{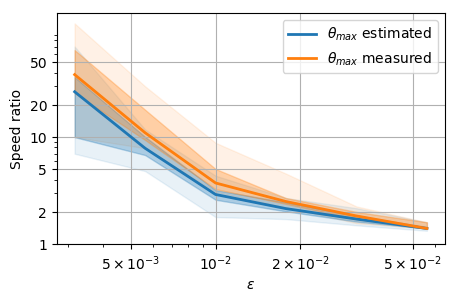}
		\subcaption{Random cost, uniform marginals}\label{fig:compare_b}
	\end{minipage}
	\caption{\label{fig:comparison}Speed ratio between SK algorithm and its accelerated SOR version Algorithm~\ref{SOR} w.r.t parameter $\epsilon$.}
\end{figure}

Figure~\ref{fig:comparison} displays the ratio of the number of iterations required to reach a precision of $10^{-6}$ on the dual variable $\da$ for SK algorithm and Algorithm \ref{SOR}. It is is worth noting that the complexity per iteration of these algorithms is the same modulo negligible terms, so this ratio is also the runtime ratio (our algorithm can also be parallelized on GPUs just as SK algorithm). In both experimental settings, for low values of the regularization parameter $\epsilon$, the acceleration ratio is above $20$  with Algorithm \ref{SOR}.

\section{Conclusion and perspectives}
The SK algorithm is widely used to solve entropy regularized OT.
In this paper we have first shown that RNA methods are adapted to the numerical acceleration of the SK algorithm. Nevertheless the global convergence of such approaches may not be ensured. 

Next, we demonstrate that the use of overrelaxed projections is a natural and simple idea to ensure and accelerate the convergence, while keeping many nice properties of the SK algorithm (first order, parallelizable, simple).
We have proposed an algorithm that adaptively chooses the overrelaxation parameter so as to guarantee global convergence.
The acceleration of the convergence speed is numerically impressive, in particular in low regularization regimes.
It is theoretically supported in the local regime by the standard analysis of SOR iterations.

This idea of overrelaxation can be generalized to solve more general problems such as multi-marginal OT, barycenters, gradient flows, unbalanced OT~\cite[chap. 4]{chizat2017these} but there is no systematic way to derive globally convergent algorithms.
Our work is a step in the direction of building and understanding the properties of robust first order algorithms for solving OT. More understanding is needed regarding SOR itself (global convergence speed, choice of ${\theta_0}$), but also its relation to other acceleration methods~\cite{2016arXiv160604133S,2017arXiv170509634A}.

\section*{Acknowledgments}
This study has been carried out with financial support from the French State, managed by the French National Research Agency (ANR) in the frame of the  GOTMI project (ANR-16-CE33-0010-01).

\bibliographystyle{abbrv}

\end{document}